\definecolor{links}{RGB}{76,153,0}
\newcommand{\N}{\mathbb{N}}
\newcommand{\res}{\text{res}}
\newcommand{\supp}{\text{supp}}
\newcommand{\iso}{\text{Iso}}
\newcommand{\G}{\mathcal{G}}
\newcommand{\cG}{\ensuremath{\mathcal{G}}}
\newcommand{\cH}{\ensuremath{\mathcal{H}}}
\newcommand{\cK}{\ensuremath{\mathcal{K}}}
\newcommand{\rE}{\ensuremath{\mathrm{E}}}
\newcommand{\CO}{\ensuremath{\mathrm{CO}}}
\newcommand{\Tzero}{\ensuremath{{(\mathrm{T}_0)}}}
\newcommand{\Tone}{\ensuremath{{(\mathrm{T}_1)}}}
\newcommand{\Gnought}{\ensuremath{\G}^{(0)}}
\newcommand{\thra}{\ensuremath{\twoheadrightarrow}}
\newcommand{\ev}{\ensuremath{\mathrm{ev}}}
\newcommand{\Sym}{\ensuremath{\mathrm{Sym}}}
\newcommand{\Cstar}{\ensuremath{\text{C}^*}}
\newcommand{\im}{\ensuremath{\mathop{\mathrm{im}}}}
\renewcommand{\supp}{\ensuremath{\mathop{\mathrm{supp}}}}
\theoremstyle{plain}
\newtheorem{theorem}{Theorem}[section]
\newtheorem{lemma}[theorem]{Lemma}
\newtheorem{proposition}[theorem]{Proposition}
\newtheorem{introtheorem}{Theorem}
\theoremstyle{definition}
\newtheorem{definition}[theorem]{Definition}
\theoremstyle{remark}
\newtheorem{remark}[theorem]{Remark}
\newtheorem{example}[theorem]{Example}
\newcommand{\authors}{Gabriel Favre and Sven Raum}
\renewcommand{\title}{An algebraic characterization of ample type I groupoids }
\begin{document}

\thispagestyle{empty}

\noindent
\begin{minipage}{\linewidth}
  \textbf{\Large \title}%
  \begin{center}
    by \authors    
  \end{center}
\end{minipage}

\renewcommand{\thefootnote}{}
\footnotetext{
  \textit{MSC classification: 20M18; 22A22, 20M30, 06E15, 46L89}
}
\footnotetext{
  \textit{Keywords: inverse semigroup, ample groupoid, noncommutative Stone duality, CCR, type I}
}

\vspace{2em}
\noindent
\begin{minipage}{\linewidth}
  \textbf{Abstract}.
  We give algebraic characterizations of the type I and CCR properties for locally compact second countable, ample Hausdorff groupoids in terms of subquotients of its Boolean inverse semigroup of compact open bisections. It yields in turn algebraic characterizations of both properties for inverse semigroups in terms of subquotients of their booleanization.
\end{minipage}

\section{Introduction}

Inverse semigroups provide an algebraic framework to study partial dynamical systems and groupoids.  This relation is described by noncommutative Stone duality, extending the classical duality between totally disconnected Hausdorff spaces and generalized Boolean algebras to a duality between ample Hausdorff groupoids and so called Boolean inverse semigroups.  This corner stone of the modern theory of inverse semigroups was introduced by Lawson and Lenz \cite{lawson2010, lawson2012, lawsonlenz13} following ideas of Exel \cite{exel2009} and Lenz \cite{lenz2008}.  In a nutshell, noncommutative Stone duality associates to an ample Hausdorff groupoid the Boolean inverse semigroup of its compact open bisections.  Various filter constructions can be employed to describe the inverse operation \cite{lawson2012, lawsonmargolissteinberg2013, armstrongclarkhuefjoneslin2020}.

Since its discovery of noncommutative Stone duality, an important aspect of this theory was to establish a dictionary between properties of ample groupoids and Boolean inverse semigroups.  A summary of the results obtained so far, can be found in Lawson's survey article \cite{lawson2019-survery}.  From an operator algebraic and representation theoretic point of view, one fundamental side of groupoids and semigroups is the property of being CCR or type I.  These notions arise upon considering groupoid and semigroup \Cstar-algebras.  So far, neither of these properties has been addressed by the dictionary of noncommutative Stone duality.  Our first two main results fill this gap and establish algebraic characterizations of CCR and type I Boolean inverse semigroups, matching Clark's characterization of the respective property for groupoids \cite{clark07}.

Our characterisation roughly takes the form of forbidden subquotients, in analogy with the theme of forbidden minors in graph theory and other fields of combinatorics.  The Boolean inverse semigroup $B_\Tone$ featuring in the next statement is introduced in \th\ref{ex:btone}.  It is the algebraically simplest possible Boolean inverse semigroup which is not CCR.  We refer the reader to Sections \ref{sec:inverse-semigroups} and \ref{sec:nc-stone-duality} for further information about Boolean inverse semigroups, their corners and group quotients.

\begin{introtheorem}
  \th\label{introthm:ccr-groupoid}
  Let $\G$ be a second countable, ample Hausdorff groupoid. Then $\G$ is CCR if and only if the following two conditions are satisfied.
  \begin{itemize}
  \item No corner of $\Gamma(\G)$ has a non virtually abelian group quotient, and
  \item $\Gamma(\G)$ does not have $B_\Tone$ as a subquotient.
  \end{itemize}
\end{introtheorem}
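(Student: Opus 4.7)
The plan is to derive the theorem from Clark's groupoid characterization of the CCR property \cite{clark07} and transport both ingredients --- virtual abelianness of the isotropy groups and the $\mathrm{T}_1$ property of the orbit space --- through noncommutative Stone duality to the Boolean inverse semigroup side. Clark's criterion says that, for a second countable ample Hausdorff groupoid $\G$, being CCR is equivalent to every isotropy group $\G_x^x$ being virtually abelian together with the orbit space $\Gnought/\G$ being $\mathrm{T}_1$. It is therefore enough to match each of these two conditions with its algebraic counterpart in the statement.

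For the isotropy condition, I would establish the dictionary between isotropy groups of $\G$ and group quotients of corners of $\Gamma(\G)$. A unit $x\in\Gnought$ (or more generally an invariant closed subset) singles out a corner of $\Gamma(\G)$, and the maximal group quotient of that corner recovers the isotropy group of $\G$ reduced to this subset. Conversely every corner of $\Gamma(\G)$ arises as $\Gamma$ of the restriction of $\G$ to an invariant closed set, and its group quotients are computed by closing up isotropy groups of units in that set. Through this correspondence, condition (i) translates precisely to the first bulleted condition: no corner of $\Gamma(\G)$ admits a non virtually abelian group quotient. This step is essentially formal once one verifies that the filter construction of \cite{lawson2012, lawsonmargolissteinberg2013} respects corners and group quotients in the expected way.

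For the orbit-space condition, I would argue that $\Gnought/\G$ fails to be $\mathrm{T}_1$ if and only if $\Gamma(\G)$ has $B_\Tone$ as a subquotient. By construction in \th\ref{ex:btone}, $B_\Tone$ arises as the Boolean inverse semigroup of the simplest groupoid whose orbit space is not $\mathrm{T}_1$: two orbits, one lying in the closure of the other. If $\mathrm{T}_1$ fails for $\G$, pick two witnessing orbits, restrict to an invariant open neighbourhood containing them, and quotient by the complement of a minimal specialisation pair; under the dictionary between invariant open/closed subsets of $\Gnought$ on the one hand and ideals/quotients/restrictions on $\Gamma(\G)$ on the other, this produces $B_\Tone$ as a subquotient. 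Conversely, a $B_\Tone$ subquotient of $\Gamma(\G)$ is the same as a $B_\Tone$ subquotient of some restriction of $\G$; the corresponding specialisation of orbits pulls back to a pair of orbits in $\G$ itself with one in the closure of the other, so $\mathrm{T}_1$ fails.

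The main obstacle is the second direction of Step 3: one must ensure that the order-theoretic witness to the failure of $\mathrm{T}_1$ detected inside a $B_\Tone$ subquotient actually lifts back to a failure of $\mathrm{T}_1$ in $\G$, rather than being created by the passage through restrictions, ideals and additive congruences. This requires a careful compatibility between the Stone correspondence and each of these subquotient operations, and is the place where most of the work will go; Clark's theorem is then used as a black box, and the isotropy correspondence of Step 2 is routine by comparison.
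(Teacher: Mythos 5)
Your overall strategy is the same as the paper's: quote Clark's theorem (\th\ref{thm:clark}) and translate each of its two conditions --- virtually abelian isotropy and $\Tone$ orbit space --- into the corresponding bulleted condition, which is exactly what the paper does via \th\ref{prop:isotropy-groups-characterisation} and \th\ref{prop:tone}. However, the two steps you label as ``routine'' and ``essentially formal'' each contain a genuine difficulty that your sketch does not resolve.

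For the isotropy translation: to realize an isotropy group $\G|_x$ as a group quotient of a corner one needs a compact open $U \ni x$ with $\G x \cap U = \{x\}$, so that $\{x\}$ is $\G|_U$-invariant and restriction yields $\Gamma(\G|_U) \thra (\G|_x)_0$. Such a $U$ exists only when orbits are locally closed, i.e.\ when the orbit space is $\Tzero$; this is a standing hypothesis of \th\ref{prop:isotropy-groups-characterisation}, and in the proof of the theorem one must first use the $B_\Tone$ condition to rule out failure of $\Tone$ before the isotropy translation is available. So the two bullets are not independent translations, contrary to what your Step 2 suggests. Moreover, your claim that every corner of $\Gamma(\G)$ arises from restriction to an \emph{invariant closed} set is incorrect: corners correspond to restrictions to compact open subsets of $\Gnought$, which need not be invariant; what saves the converse direction is that a group quotient of a corner forces the induced character on idempotents to be evaluation at a point that \th\ref{lem:restriction-implies-invariant} shows to be fixed by $\G|_U$.

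For the $\Tone$ translation, your recipe ``restrict to an invariant open neighbourhood containing them, and quotient by the complement of a minimal specialisation pair'' would fail: the natural witnessing set $A = \{x_n : n \in \N\} \cup \{x_\infty\}$, with $x_n \in \G x$ and $x_\infty = \lim x_n \notin \G x$, is closed but not $\G$-invariant, and Remark \ref{rem:tone-no-corner} exhibits a groupoid where $B_\Tone$ cannot be obtained from any corner or restriction of $\G$ itself. The missing idea is to first pass to the Boolean inverse \emph{subsemigroup} $B \subseteq \Gamma(\G)$ generated by bisections $s_n$ carrying $x_n$ to $x_{n+1}$ together with all idempotents; $A$ is invariant for the open subgroupoid $\cH = \bigcup B$, and restriction then gives $B \thra \Gamma(\cH|_A) \cong B_\Tone$. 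This is precisely why the statement must involve a subquotient rather than a quotient of a corner. You correctly flag the converse as the hard part; to complete it one lifts the generators of $B_\Tone$, orthogonalizes their supports below a common idempotent $f$, and uses compactness of $f$ together with the finite intersection property of the idempotents $t_n^*(\supp s_n)t_n$ to find a point whose orbit accumulates.
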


\begin{introtheorem}
  \th\label{introthm:type-I-groupoid}
  Let $\G$ be a second countable, ample Hausdorff groupoid. Then $\G$ is type ${\rm I}$ if and only if the following two conditions are satisfied.
  \begin{itemize}
  \item No corner of $\Gamma(\G)$ has a non virtually abelian group quotient, and
  \item $\Gamma(\G)$ does not have an infinite, monoidal and $0$-simplifying subquotient.
  \end{itemize}
\end{introtheorem}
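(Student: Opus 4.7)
The approach is to invoke Clark's characterisation of the type I property \cite{clark07} and translate its two conditions across noncommutative Stone duality. For a second countable, ample Hausdorff groupoid, Clark's criterion reads: $\G$ is type I if and only if (a) the orbit space $\Gnought/\G$ is $T_0$, and (b) every isotropy group $\G_x^x$ is virtually abelian (equivalently, type I as a discrete group, by Thoma's theorem). The first bullet of the statement is identical to that of \th\ref{introthm:ccr-groupoid}, so the isotropy-to-germ-group correspondence used there handles the equivalence between (b) and our first bullet verbatim. It thus remains to show that (a) is equivalent to the absence of an infinite, monoidal, $0$-simplifying subquotient of $\Gamma(\G)$.

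Under noncommutative Stone duality, a monoidal $0$-simplifying Boolean inverse semigroup corresponds to a compact, minimal ample Hausdorff groupoid. A subquotient of $\Gamma(\G)$ corresponds, under this duality, to such a groupoid realised as a restriction of $\G$ to a subset of the form $Z \cap K$, where $Z \subseteq \Gnought$ is closed $\G$-invariant and $K \subseteq \Gnought$ is compact open. The second bullet therefore asserts that no such restriction is simultaneously infinite, compact, and minimal.

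For the forward direction, suppose $\Gnought/\G$ is not $T_0$ and pick distinct orbits $O_1, O_2$ each lying in the closure of the other. Setting $Z := \overline{O_1} = \overline{O_2}$, the restriction $\G|_Z$ is minimal with at least two distinct orbits, and thus has infinite unit space. If $Z$ is compact, $\Gamma(\G|_Z)$ is already the desired subquotient. Otherwise, second countability yields a compact open $K \subseteq \Gnought$ with $K \cap Z$ infinite. The standard fact that restriction of a minimal ample Hausdorff groupoid to a compact open of its unit space remains minimal then shows that $\G|_{Z \cap K}$ is monoidal, $0$-simplifying, and infinite, as required. For the converse, an infinite monoidal $0$-simplifying subquotient yields an infinite, compact, minimal ample Hausdorff groupoid $\cH$ realised inside $\G$ as above. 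Since a transitive compact ample Hausdorff groupoid must have finite unit space (a straightforward compactness argument using a finite cover by bisections), $\cH$ has at least two orbits; minimality forces these to be dense in its unit space, so its orbit space fails $T_0$. Because $\cH$ arises from $\G$ via restriction to a closed invariant subset, this failure propagates to $\Gnought/\G$, concluding the argument.

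The main obstacle lies in the second equivalence. Two items in the dictionary deserve careful justification: that restriction of a minimal ample groupoid to a compact open of its unit space stays minimal (so that passing from $\G|_Z$ to $\G|_{Z \cap K}$ is harmless), and the finiteness of transitive compact ample Hausdorff groupoids. A further subtle point is the verification that every subquotient in the Boolean inverse semigroup sense is realisable, up to isomorphism, as $\Gamma(\G|_{Z \cap K})$; this requires a cohesive use of the duality together with the paper's chosen definition of subquotient.
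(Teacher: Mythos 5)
Your overall strategy---reduce to the Clark--Thoma criterion and translate the two conditions separately---is exactly the paper's, and the isotropy half is indeed disposed of by the same argument as for the CCR theorem. The gaps are both in your treatment of the \Tzero{} condition, which is where the actual content of this theorem lies (Proposition~\ref{prop:tzero} in the paper). In the forward direction, your claim that $\G|_Z$ is minimal for $Z = \overline{O_1} = \overline{O_2}$ is false: $Z$ is the closure of a dense orbit, but it may contain further orbits that are not dense in $Z$ (for instance a closed orbit on which both $O_1$ and $O_2$ accumulate; the three-point orbit space with open sets $\emptyset$, $\{1,2\}$, $\{1,2,3\}$ is non-\Tzero{} and realisable). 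Your next step, restricting a \emph{minimal} groupoid to a compact open subset of its unit space, is therefore applied to a groupoid that need not be minimal. The paper avoids this by passing through the Ramsey--Effros--Mackey dichotomy to a self-accumulating orbit and then invoking Steinberg's lemma that the set of units with dense orbit is comeager, in order to locate a compact open $U \subseteq Z$ on which the restriction genuinely is minimal; some argument of this kind cannot be skipped.

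The second and more serious problem is the dictionary entry on which your converse direction rests: a subquotient of $\Gamma(\G)$ is \emph{not} in general of the form $\Gamma(\G|_{Z \cap K})$ with $Z$ closed invariant and $K$ compact open. For a Boolean inverse subsemigroup $C \subseteq \Gamma(\G)$, the idempotent semilattice $\rE(C)$ is a generalized Boolean subalgebra of $\CO(\Gnought)$, so by Stone duality the unit space of the groupoid dual to $C$ is a \emph{quotient} of (an open subset of) $\Gnought$ rather than a subspace; only afterwards does one restrict to a closed invariant set for the quotient map $C \thra B$. This is precisely why the paper's proof of Proposition~\ref{prop:tzero} works with an abstract groupoid $\cH$ satisfying $\Gamma(\cH) \cong C$, introduces the surjection $\varphi \colon V \to X$ dual to the inclusion $\CO(X) \subseteq \CO(V)$, and then argues by hand that self-accumulation of orbits in the subquotient groupoid is detected by orbits of $\G|_V$ through $\varphi$. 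You flag the realisability of subquotients as restrictions as a ``subtle point'' to be verified, but it fails as stated, so the propagation of non-\Tzero{}-ness back to $\G$ in your last step has no support; this direction needs to be rebuilt along the lines of the paper's pull-back argument.
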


Historically the notation of CCR and type I \Cstar-algebras was motivated by problems in representation theory.  Roughly speaking, groups enjoying one of these properties have a well behaved unitary dual.  Originally studied in the context of Lie groups and algebraic groups \cite{harish-chandra53, dixmier57, bernstein74-type-I,bekkaechterhoff2020}, other classes of non-discrete groups were considered more recently \cite{ciobotaru15,houdayerraum16-non-amenable}.  The question which discrete groups are CCR and type I was answered conclusively by Thoma \cite{thoma68}, characterizing them as the virtually abelian groups.  A more direct proof of Thoma's result was obtained by Smith \cite{smith72} and his original proof was the basis for a Plancherel formula for general discrete groups recently obtained by Bekka \cite{bekka2020-plancherel}.  The fundamental nature of CCR and type I \Cstar-algebras also led to study other group like objects such as the aforementioned characterisation of groupoids with these property by Clark \cite{clark07}.  Compared with Thoma's characterisation of discrete type I groups, our Theorems \ref{introthm:ccr-groupoid} and \ref{introthm:type-I-groupoid} can be considered an analogue for Boolean inverse semigroups.  Naturally, the question arises whether a similar characterisation can be obtained for inverse semigroups.  The bridge between these structures is provided by the booleanization of an inverse semigroup \cite{lawsonlenz13, lawson2019}, which we denote by $B(S)$.  In view of a direct algebraic construction of the booleanization we expose in Section \ref{sec:inverse-semigroups}, the next two results give an intrinsic characterisation of CCR and type I inverse semigroups.
\begin{introtheorem}
  \th\label{introthm:ccr-semigroup}  
Let $S$ be a discrete inverse semigroup. Then $S$ is CCR if and only if the following two conditions are satisfied
\begin{enumerate}
    \item $S$ does not have any non virtually abelian group subquotient, and
    \item $B(S)$ does not have $B_{(T_1)}$ as a subquotient.
\end{enumerate}
\end{introtheorem}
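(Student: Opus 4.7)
The plan is to deduce this theorem from Theorem \ref{introthm:ccr-groupoid} through the booleanization $S \mapsto B(S)$. By noncommutative Stone duality, $B(S)$ is realised as the Boolean inverse semigroup of compact open bisections $\Gamma(\G_S)$ of some ample Hausdorff groupoid $\G_S$, so applying Theorem \ref{introthm:ccr-groupoid} to $\G_S$ will translate directly into two conditions on $B(S)$.

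First I would verify that $S$ is CCR if and only if $\G_S$ is CCR, equivalently if and only if $B(S)$ is CCR. On the \Cstar-algebraic side, the universal property of the booleanization identifies $*$-representations of $S$ with $*$-representations of $B(S)$, and the latter is canonically the convolution algebra of $\G_S$, so $\Cstar(S) \cong \Cstar(\G_S)$. Applying Theorem \ref{introthm:ccr-groupoid} to $\G_S$ then yields two conditions on $B(S) = \Gamma(\G_S)$: no corner has a non virtually abelian group quotient, and $B_\Tone$ is not a subquotient. The second is already condition (2) of the statement, so only the first requires translation.

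The crux is therefore to prove that the \emph{no corner of $B(S)$ has a non virtually abelian group quotient} condition is equivalent to the intrinsic condition \emph{$S$ has no non virtually abelian group subquotient}. One direction is easy: any group subquotient of $S$ embeds, via the canonical map $S \hookrightarrow B(S)$, into a group subquotient of a suitable corner of $B(S)$, and is non virtually abelian precisely when the corresponding quotient of $B(S)$ is. For the converse, given a non virtually abelian group quotient $G$ of a corner $eB(S)e$, I would exploit the explicit algebraic description of $B(S)$ from Section \ref{sec:inverse-semigroups}: every idempotent of $B(S)$ is a finite compatible join of idempotents of $S$, and every element of $B(S)$ refines to a finite compatible join of elements of $S$. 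After refining along a common partition of unity below $e$, representatives of the quotient map $eB(S)e \twoheadrightarrow G$ can be chosen inside $S$, and the relations defining $G$ can be localised to a suitable inverse subsemigroup of $S$, yielding the desired non virtually abelian group subquotient of $S$.

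The main obstacle is precisely this converse implication. Corners of $B(S)$ are genuinely larger than corners cut out by an idempotent of $S$, since joins of pairwise compatible but originally unrelated elements of $S$ become available, and a priori the group $G$ may mix several such pieces. The combinatorial bookkeeping required to localise $G$ to a single coherent fragment of $S$ without losing its failure to be virtually abelian, using the partition-of-unity description of $B(S)$, is the technical heart of the argument and the step I expect to consume most of the work.
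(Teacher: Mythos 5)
Your overall route coincides with the paper's: identify $\Cstar(S)$ with $\Cstar(\G(B(S)))$, apply \th\ref{introthm:ccr-groupoid} (equivalently Clark--Thoma) to that groupoid, keep the $B_\Tone$ condition unchanged, and translate the corner condition on $B(S)$ into a condition on $S$ via the formal-sum model of the booleanization. The paper performs exactly this translation in \th\ref{lem:quotient-semigroup-Boolean-envelop} and \th\ref{prop:isotropy-groups-characterisation-semigroups}. However, the step you single out as the technical heart dissolves into a one-line observation that you are missing: if $\pi \colon eB(S)e \thra G_0$ is a group quotient, then $G_0$ has only the idempotents $0$ and $1$, so $\pi$ restricted to idempotents is a character; consequently, in any class $\sum_i s_i e_i$ exactly one of the pairwise orthogonal $e_i$ satisfies $\pi(e_i) = 1$, and $\pi(\sum_i s_i e_i) = \pi(s_{i_0})\pi(e_{i_0})$. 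Thus $\pi$ is already surjective on the image of $S_0$ --- no refinement along partitions of unity is needed, the character localises everything for you. To replace the corner $e B(S) e$ by a corner of $S$ itself, the paper realises this character as evaluation at a point $x$ of the unit space and chooses $q \in E(S)$ with $x(q) = 1$ (shrinking further by idempotents of $S$ separating $x$ from the finitely many other points of its orbit meeting $q$), which lands in $pS_0p$ with $p \in E(S)$.

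The genuine gap is in the direction you call easy. Condition (1) of \th\ref{introthm:ccr-groupoid} forbids non virtually abelian group \emph{quotients of corners} of $\Gamma(\G)$, whereas condition (1) here forbids group \emph{subquotients} of $S$. Exhibiting, from $T \leq S$ with $T \thra G$, a group subquotient of a corner of $B(S)$ does not contradict the former, and you give no argument producing a group \emph{quotient} of a corner of $B(S)$; your phrase ``is non virtually abelian precisely when the corresponding quotient of $B(S)$ is'' presupposes the correspondence you need to establish. The paper does not prove this equivalence of corner conditions either; it avoids the issue by running the argument through Clark's theorem, so that for ``conditions $\Rightarrow$ CCR'' one only needs that every isotropy group of $\G(S)$ is a group subquotient of $S$ (which is what \th\ref{prop:isotropy-groups-characterisation-semigroups} delivers once the $B_\Tone$ condition forces the orbit space to be \Tone{} and hence \Tzero{}), while ``CCR $\Rightarrow$ (1)'' is a separate statement about arbitrary group subquotients of a CCR inverse semigroup. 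You should either supply a genuine proof that a non virtually abelian group subquotient of $S$ yields a non virtually abelian group quotient of a corner of $B(S)$, or restructure the argument as the paper does so that only isotropy groups need to be controlled.
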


\begin{introtheorem}
  \th\label{introthm:type-I-semigroup}  
  Let $S$ be an inverse semigroup. $S$ is type I if and only if the following two conditions are satisfied
\begin{enumerate}
    \item $S$ does not have any non virtually abelian group subquotient, and
    \item $B(S)$ does not have an infinite, monoidal, $0$-simplifying subquotient.
\end{enumerate}
\end{introtheorem}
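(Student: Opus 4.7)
The plan is to deduce this statement from \th\ref{introthm:type-I-groupoid} by means of the booleanization functor $S \mapsto B(S)$, using the same bridge that will reduce \th\ref{introthm:ccr-semigroup} to \th\ref{introthm:ccr-groupoid}. Let $\G(S)$ denote the ample Hausdorff groupoid whose associated Boolean inverse semigroup of compact open bisections is $B(S)$, as provided by noncommutative Stone duality, so that $B(S) \cong \Gamma(\G(S))$ naturally. The whole argument then proceeds by transferring the two conditions of \th\ref{introthm:type-I-groupoid} from $\G(S)$ to $S$ and by matching the notion of type I on both sides.

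First, I would show that $S$ is type I if and only if $\G(S)$ is type I as a groupoid. Since being type I is by definition a property of the (universal or tight) $C^*$-algebra, this reduces to identifying $C^*(S)$ with $C^*(\G(S))$. The construction of the booleanization given in Section~\ref{sec:inverse-semigroups} together with the universal property of inverse semigroup $C^*$-algebras should produce a canonical isomorphism $C^*(S) \cong C^*(\G(S))$, or at worst a Morita equivalence, and either suffices to transfer the type I property. With this identification in hand, the second condition of \th\ref{introthm:type-I-groupoid} for $\G(S)$ translates verbatim into the second condition of the present statement, since it is phrased entirely in terms of $\Gamma(\G(S)) \cong B(S)$.

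Second, I would translate the group subquotient condition across the booleanization. Every group subquotient of $S$ can be localised at an idempotent and pushed forward along the canonical map $S \to B(S)$ to produce a group quotient of a corresponding corner of $B(S)$; conversely, a group quotient of a corner of $B(S)$ is generated by bisections coming from $S$ and can be pulled back to yield a group subquotient of $S$. The main obstacle is verifying this dictionary carefully enough to ensure that it preserves the class of virtually abelian groups in both directions, since $S \hookrightarrow B(S)$ is not surjective and corners of $B(S)$ can a priori display subquotient structure not manifest in $S$. Once this group-theoretic equivalence is in place, the statement follows by applying \th\ref{introthm:type-I-groupoid} to the groupoid $\G(S)$.
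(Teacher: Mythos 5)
Your proposal follows essentially the same route as the paper: identify $\Cstar(S)$ with $\Cstar(\G(S))$, apply the groupoid theorem (equivalently, Clark--Thoma) to $\G(S)$, and note that the second condition transfers verbatim because $\Gamma(\G(S)) \cong B(S)$. The one step you flag as the ``main obstacle'' --- pulling a group quotient of a corner of $B(S)$ back to a group subquotient of $S$ itself --- is exactly what the paper settles in \th\ref{lem:quotient-semigroup-Boolean-envelop} and \th\ref{prop:isotropy-groups-characterisation-semigroups}: since a group with zero has only two idempotents, any homomorphism $B(pSp) \thra G_0$ restricts to a character on idempotents, which selects a single summand $t_{i_0}e_{i_0}$ of each orthogonal join $\sum_i t_ie_i$, so the composite $pS_0p \to G_0$ is already surjective and no new (non virtually abelian) group structure can appear in $B(S)$ that is not visible in $S$. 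With that lemma supplied, your argument is complete and coincides with the paper's.
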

The booleanization of an inverse semigroup was first introduced in \cite{lawson2019}. Our description is more natural from an operator algebraic point of view and equivalent to the original definition of Lawson.

This paper contains five sections.  After the introduction, we expose necessary preliminaries on (Boolean) inverse semigroups, ample groupoids and noncommutative Stone duality.  In Section \ref{sec:separation-properties}, we study separation properties of the orbit space of a groupoid and obtain algebraic characterisations of groupoids with $\Tone$ and $\Tzero$ orbit spaces. In Section~\ref{sec:isotropy-groups}, we relate isotropy groups of a groupoid to subquotients of the associated Boolean inverse semigroup.  The proofs of our main results are collected in Section \ref{sec:main-proofs}.

\subsection*{Acknowledgements}

S.R.\ was supported by the Swedish Research Council through grant number 2018-04243 and by the European Research Council (ERC) under the European Union's Horizon 2020 research and innovation programme (grant agreement no. 677120-INDEX).  He would like to thank Piotr Nowak and Adam Skalski for their hospitality during the authors stay at IMPAN.  G.F.\ would like to thank Piotr Nowak for the hospitality and financial support during his visit to IMPAN in autumn 2020.

\section{Preliminaries}
\label{sec:preliminaries}

In this section, we recall all notations relevant to our work and introduce some elementary constructions that will be important throughout the text.  For inverse semigroups the standard reference is \cite{lawson1998}.  The survey article \cite{lawson2019-survery} describes recent advances and the state of the art in inverse semigroup theory.  Further, \cite{lawson2019} and \cite[Sections 3 and 4]{lawson2019-survery} provide an introduction to Boolean inverse semigroups.  It has to be pointed out that the definition of Boolean inverse semigroups used in the literature has changed over the years, so that care is due when consulting older material.  The standard reference for groupoids and their C$^*$-algebras is \cite{renault80}.  For groupoids attached to inverse semigroups and Boolean inverse semigroups, we refer to \cite{li2017-mfo, lawson2012}.

\subsection{Inverse semigroups and Boolean inverse semigroups}
\label{sec:inverse-semigroups}

In this section we recall the notions of inverse semigroups, Boolean inverse semigroups and the link between them provided by the universal enveloping Boolean inverse semigroup of an inverse semigroup, termed booleanization.

An \emph{inverse semigroup} $S$ is a semigroup in which for every element $s \in S$ there is a unique element $s^* \in S$ satisfying $ss^*s = s$ and $s^*ss^* = s^*$.  The set of idempotents $E(S) \subseteq S$ forms a commutative meet-semilattice, when endowed with the partial order $e \leq f$ if $e f = e$.  We denote by $\supp s = s^*s$ and $\im s = ss^*$ the support and the image of an element $s \in S$, which are idempotents.  The partial order on $E(S)$ extends to $S$ by declaring $s \leq t$ if $\supp s \leq \supp t$ and $t \supp s = s$.  Given an inverse semigroup $S$, we denote by $S_0$ the inverse semigroup with zero obtained by formally adjoining an absorbing idempotent $0$.  In particular, we will make use of groups with zero.  A \textit{character} on $E(S)$ is a non-zero semilattice homomorphism to $\{ 0, 1 \}$.   We will denote by $\widehat{E(S)}$ the space of characters on $E(S)$ equipped with the topology of pointwise convergence.

Let $S$ be an inverse semigroup with zero.  Two elements $s,t \in S$ are called \emph{orthogonal}, denoted $s \perp t$, if $s t^* = 0 = s^*t$.  A \emph{Boolean inverse semigroup} is an inverse semigroup with zero, whose semilattice of idempotents is a generalized Boolean algebra such that finite families of pairwise orthogonal elements have joins.  Recall that a generalized Boolean algebra can be conveniently described as a Boolean rng. Given two Boolean inverse semigroups $B$ and $C$, and an inverse semigroup morphism $\phi : B \to C$, we say that $\phi$ is a morphism of Boolean inverse semigroups if it preserves the joins of orthogonal elements.

To every inverse semigroup $S$, one associates the \emph{enveloping Boolean inverse semigroup} or \emph{booleanization} $S \subseteq B(S)$, which satisfies the universal property that for every Boolean inverse semigroup $B$ and every semigroup homomorphism $S \to B$, there is a unique extension to $B(S)$ such that the following diagram commutes.
\begin{gather*}
  \xymatrix{
    S \ar[r] \ar[d] & B \\
    B(S) \ar[ur]
  }
\end{gather*}
The enveloping Boolean inverse semigroup is conveniently described as the left adjoint of the forgetful functor from Boolean inverse semigroups to inverse semigroups with zero \cite{lawson2019}.  We will use the following concrete description.  Consider a semigroup $S$.  The semigroup algebra $I(S) = F_2[E(S)]$ is a Boolean rng and whose characters (as a rng) are in one to one correspondence with characters of $E(S)$.  We consider the following set of formal sums.
\begin{gather*}
  C(S) = \{\sum_i s_i e_i  \mid s_i \in S, e_i \in I(S) \text{ and } (e_i)_i \text{ and } (s_ie_is_i^*)_i \text{ are pairwise orthogonal}\}.
\end{gather*}
We consider the equivalence relation given by the following condition.  We have $\sum_i s_i e_i \sim \sum_j t_j f_j$ if and only if $e_i f_j \neq 0$ implies that there is some $p \in E(S)$ such that $s_i p = t_j p$.  One readily checks that the quotient of $C(S)$ by this equivalence relation is a Boolean inverse semigroup.  Thanks to the existence of joins of orthogonal families, every map of semigroups with zero into a Boolean inverse semigroup $S_0 \to B$ extends uniquely to a map ${C(S)}/{\sim} \to B$.  By uniqueness of adjoint functors, this shows that ${C(S)}/{\sim} \cong B(S)$ is the booleanization of $S$.

\subsection{Ample groupoids}
\label{sec:ample-groupoids}

In this section we fix our notation for groupoids and recall some basic results.  We recommend \cite{renault80} and \cite{paterson1999} as resources on the topic.

Given a groupoid $\G$, we denote its set of units by $\Gnought$ and the range and source map by $r: \G \to \Gnought$ and $d: \G \to \Gnought$, respectively.  Throughout the text, $\G$ will be a \textit{topological groupoid} meaning it is equipped with a topology making the multiplication and inversion continuous.  A \textit{bisection} of a topological groupoid $\G$ is a subset $U \subseteq \G$ such that the restrictions $d|_U$ and $r|_U$ are homeomorphisms onto their images.  We call $\G$ \emph{{\'e}tale} if its topology has a basis consisting of bisections.  It is called \emph{ample} if its topology has a basis consisting of compact open bisections.

If $\G$ is a groupoid and $A \subseteq \Gnought$ is a set of units, we denote by $\G|_A = \{g \in \G \mid d(g), r(g) \in A\}$ the restriction of $\G$ to $A$.  It does not need to be {\'e}tale, even if $G$ is so.  If $\G$ is {\'e}tale and $A \subseteq \Gnought$ is open, then $G|_A$ is {\'e}tale too.

The isotropy at $x \in \Gnought$ is $\G|_x = \G|_{\{x\}}$.  We denote by $\iso(\G)$ the union over all isotropy groups, considered as subsets of $\G$.  Then $\G$ is \emph{effective} if the interior of $\iso(\G) \setminus \Gnought$ is empty. Given a unit $x \in \Gnought$, its orbit is denoted by $\G x \subseteq \Gnought$.  We call $\G$  \emph{minimal} if all its orbits are dense.

The set of orbits of a topological groupoid inherits a natural topology.  We will be interested in separation properties of this \emph{orbit space}.  First, we observe that the orbit space of a groupoid is a \Tone-space if and only if its orbits are closed.  An analogue characterisation of groupoids whose orbit space is a \Tzero-space, is the subject of the Ramsey-Effros-Mackey dichotomy, which we now recall.
\begin{proposition}
  \th\label{prop:tzero-characterisations}
  Let $\G$ be a second countable ample groupoid.  Then the following statements are equivalent.
  \begin{itemize}
  \item The orbit space of $\G$ is \Tzero.
  \item $\G$ has a self-accumulating orbit.
  \item Orbits of $\G$ are locally closed.
  \end{itemize}
\end{proposition}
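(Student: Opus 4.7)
The plan is to adapt the classical Effros--Mackey dichotomy to the setting of ample second countable Hausdorff groupoids. I would prove the three equivalences by first establishing the topological equivalence $(3) \Leftrightarrow (1)$ via a Baire category argument in the unit space, and then identifying condition $(2)$ as the same phenomenon phrased from the complementary direction.

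For $(3) \Rightarrow (1)$, suppose every orbit is locally closed and consider two orbits $O_1, O_2$ with $\overline{O_1} = \overline{O_2} =: C$. Local closure makes each $O_i$ open in $C$, and since $\overline{O_i} = C$, each $O_i$ is open and dense in $C$. As $C$ is closed in the locally compact second countable Hausdorff space $\Gnought$, it is itself a Baire space, and any two dense open subsets must meet. Distinct orbits being disjoint, this forces $O_1 = O_2$, so the specialization preorder on the orbit space is antisymmetric, i.e., the orbit space is \Tzero.

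For $(1) \Rightarrow (3)$ I would argue contrapositively. Suppose an orbit $O = \G x$ is not locally closed, so $O$ fails to be open in $\overline{O}$. Since $\G$ is \'etale and second countable, each fibre $d^{-1}(y)$ is a discrete second countable subspace of $\G$, hence countable, so every orbit is countable and in particular $F_\sigma$ in $\Gnought$. Working inside the Polish space $\overline{O}$, one invokes Baire category over a countable base together with the openness of orbit saturations (which holds since $r$ and $d$ are open maps for \'etale groupoids): the units whose orbits meet every basic open set of $\overline{O}$ form a comeager subset, hence it is non-empty and contains a point outside $O$, yielding an orbit $O' \subseteq \overline{O} \setminus O$ with $\overline{O'} = \overline{O}$. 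Such an $O'$ witnesses the failure of \Tzero\ and simultaneously realises the self-accumulating behaviour of condition $(2)$; conversely, any self-accumulating orbit in the sense of $(2)$ directly obstructs local closure and the \Tzero\ property. Matching these descriptions pairwise closes the cycle of equivalences.

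The main obstacle will be this Baire category step: promoting the mere non-openness of $O$ in $\overline{O}$ to the existence of a distinct orbit whose closure is all of $\overline{O}$. The essential ingredients are the countability of orbits (from \'etaleness plus second countability), the openness of orbit saturations, and the Baire property of the closed set $\overline{O}$. Once this is available, the remaining implications reduce to elementary bookkeeping about the quotient topology on the orbit space.
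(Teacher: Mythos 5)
Your argument is correct in substance, but it takes a genuinely different route from the paper. The paper does not reprove the dichotomy at all: it verifies that the orbit equivalence relation is an $F_\sigma$-subset of $\Gnought \times \Gnought$ (each compact open bisection has compact, hence closed, image under $(r,d)$, and second countability gives countably many of them) and then cites the Ramsey--Effros--Mackey theorem for all three equivalences. You instead give a self-contained Baire category proof, replacing the $F_\sigma$ hypothesis by the two groupoid-specific facts that orbits are countable and that saturations of open sets are open. Your $(3)\Rightarrow(1)$ step is complete as written (you do not even need Baire there: two dense open sets in a nonempty space always meet). For the converse, the step you flag as the main obstacle does go through, but you should spell out the chain that makes it work: if $O$ is not open in $\overline{O}$, then no point of $O$ is isolated in $\overline{O}$, hence by density $\overline{O}$ is perfect, hence the countable set $O$ is meager in $\overline{O}$, while the set of points of $\overline{O}$ with dense orbit is a countable intersection of dense open saturations and therefore comeager; any point of the difference yields the second orbit. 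The same countability-plus-Baire argument is needed to justify that a self-accumulating orbit ``directly obstructs'' local closure (a countable, dense-in-itself, locally closed set would be simultaneously meager and comeager in its closure), so that phrase hides a real step. One further remark: item (2) as printed in the paper has the wrong polarity --- being \Tzero\ is equivalent to the \emph{absence} of a self-accumulating orbit, which is the version you actually prove and the version the paper uses later in Proposition \ref{prop:tzero}. In short, your approach buys independence from the external reference at the cost of redoing the descriptive-set-theoretic work that the paper outsources.
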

\begin{proof}
  The equivalence between the first two items follows from \cite[Theorem 2.1, (2) and (4)]{ramsey1990}.  In order to prove the equivalence to the last item, we want to apply \cite[Theorem 2.1, (4) and (5)]{ramsey1990}. To this end, we have to show that the equivalence relation induced by $\G$ on $X = \Gnought$ is an $F_\sigma$-subset of $X \times X$.  The map $\G \to X \times X$ restricted to any compact open bisection of $\G$ has a closed image.  We conclude with the observation that there are only countably many compact open bisections, since $\G$ is second countable.
\end{proof}

\subsection{Noncommutative Stone duality}
\label{sec:nc-stone-duality}

Classical Stone duality establishes an equivalence of categories between locally compact totally disconnected Hausdorff topological spaces and generalized Boolean algebras.  If $X$ is such a space, the generalized Boolean algebra associated with it is $\CO(X)$, the algebra of compact open subsets of $X$.  Vice versa, given a generalized Boolean algebra $B$, its spectrum $\widehat{B}$ is a locally compact totally disconnected Hausdorff topological space.  Noncommutative Stone duality generalises this correspondence to an equivalence between ample Hausdorff groupoids and Boolean inverse semigroups.  We refer the reader to \cite{lawson2010,lawson2012}.

Given an ample Hausdorff groupoid $\G$, we denote by $\Gamma(\G)$ the set of compact open bisections of $\G$, which is a Boolean inverse semigroup.  Conversely, given a Boolean inverse semigroup $B$, the set of ultrafilter for the natural order on $B$ forms an ample Hausdorff groupoid $\G(B)$.  It follows from \cite[Duality Theorem]{lawson2012} that these two operations are dual to each other.  See also \cite[Theorem 4.4]{lawson2019-survery} and \cite[Theorem 4.2]{lawsonvdovina2019}.  We will not need to specify the morphisms of the categories involved in this duality.  Invoking \cite[Theorem 1.2]{lawsonvdovina2019}, the Paterson groupoid of an inverse semigroup $S$ can now be identified with $\G(B(S))$.

Noncommutative Stone duality establishes a dictionary between properties of Boolean inverse semigroups and ample Hausdorff groupoids.  We recall some of its aspects that will be needed in this piece.

\subsubsection*{Corners and subgroupoids}

Given an ample Hausdorff groupoid $\G$, idempotents in $\Gamma(\G)$ correspond to compact open subsets of $\G$.  Given such idempotent $p \in E(\Gamma(\G))$ corresponding to $U \subseteq \Gnought$, the \emph{corner} $p \Gamma(\G) p$ is naturally isomorphic with $\Gamma(\G|_U)$.  Further, there is a one-to-one correspondence between open subgroupoids of $\G$ and Boolean inverse subsemigroups $B \subseteq \Gamma(\G)$ assigning the groupoid $\bigcup B \subseteq \G$ to a Boolean inverse semigroup $B \subseteq \Gamma(\G)$.

\subsubsection*{Morphisms}

Noncommutative Stone duality does not cover all morphisms that one would naturally consider in the respective category.  This is why it is necessary and useful to note the following two statements. They ensure compatibility of noncommutative Stone duality with restriction maps.  The next proposition is a reformulation of \cite[Proposition 5.10]{lawsonvdovina2019}. See also \cite{lenz2008}.
\begin{proposition}
  \th\label{prop:restriction-induces-bis-map}
  Let $\G$ be an ample groupoid, $A \subseteq \Gnought$ a closed $\G$-invariant set. Then the restriction map $\CO(\Gnought) \to \CO(A)$ extends to a unique homomorphism $\Gamma(\G) \to \Gamma(\G|_A)$ with the universal property that for every other homomorphism $\pi: \Gamma(\G) \to B$ such that
  \begin{gather*}
    \xymatrix{
      \CO(\Gnought) \ar[r]^{\pi|_{\CO(\Gnought)}} \ar[d]_{\res_A} & E(B) \\
      \CO(A) \ar@{-->}[ur]
    }
  \end{gather*}
  commutes, there is a unique extension to a commutative diagram
  \begin{gather*}
    \xymatrix{
      \Gamma(\G) \ar[r]^{\pi} \ar[d]_{\res_A} & B \\
      \Gamma(\G|_A) \ar@{-->}[ur]
    }
  \end{gather*}
\end{proposition}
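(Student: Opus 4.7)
The plan is to construct the extension $\res_A : \Gamma(\G) \to \Gamma(\G|_A)$ explicitly by $U \mapsto U \cap \G|_A$ and then verify the universal property, from which uniqueness follows automatically. Since $A$ is closed and $\G$-invariant, $\G|_A = d^{-1}(A) = r^{-1}(A)$ is closed in $\G$. For any compact open bisection $U \subseteq \G$, the set $U \cap \G|_A$ is a bisection (being a subset of one), open in $\G|_A$ (since $U$ is open in $\G$) and compact (as a closed subset of the compact set $U$); hence it lies in $\Gamma(\G|_A)$. That $\res_A$ preserves products (with $\G$-invariance of $A$ ensuring $\G|_A$ is closed under composition as well as decomposition), inverses and orthogonal joins is routine, and on the idempotent semilattice $\CO(\Gnought)$ it recovers the classical restriction $W \mapsto W \cap A$.

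For the universal property, suppose $\pi : \Gamma(\G) \to B$ is a morphism of Boolean inverse semigroups whose restriction $\pi|_{\CO(\Gnought)}$ factors through $\CO(A) \to E(B)$. The key observation is that $\pi(U) = 0$ whenever $U \cap \G|_A = \emptyset$: in this case $d(U) \cap A = \emptyset$, so $\pi(d(U)) = 0$ by the factorization hypothesis, and $\pi(U)^* \pi(U) = \pi(d(U)) = 0$ forces $\pi(U) = 0$. Applying this to $U_1 \setminus U_2$ and $U_2 \setminus U_1$ within the orthogonal decompositions $U_i = (U_1 \cap U_2) \sqcup (U_i \setminus U_{3-i})$ in $\Gamma(\G)$, one obtains $\pi(U_1) = \pi(U_1 \cap U_2) = \pi(U_2)$ whenever $\res_A(U_1) = \res_A(U_2)$, so $\pi$ descends unambiguously to the image of $\res_A$ inside $\Gamma(\G|_A)$.

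The final step is to extend the resulting map from the image of $\res_A$ to all of $\Gamma(\G|_A)$. Here one uses that $\G|_A$ is itself ample (being a closed subgroupoid of the ample $\G$ with the subspace topology) and that any compact open bisection $V$ of $\G|_A$ admits a finite orthogonal decomposition $V = \bigsqcup_i \res_A(U_i)$ with $U_i \in \Gamma(\G)$: cover $V$ by compact open bisections of $\G$ whose intersections with $\G|_A$ are contained in $V$, extract a finite subcover by compactness of $V$, and refine inside the Boolean inverse semigroup $\Gamma(\G)$ to obtain pairwise orthogonal pieces. Setting $\tilde{\pi}(V) := \bigvee_i \pi(U_i)$ then provides the required extension; independence of decomposition follows from the fibre-constancy of $\pi$ combined with common refinements, while multiplicativity and preservation of orthogonal joins reduce to the corresponding properties of $\pi$ and $\res_A$. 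The main technical hurdle is this decomposition-and-well-definedness argument; the remainder is routine bookkeeping with the Boolean inverse semigroup structure, and uniqueness of the whole construction is built into the universal property.
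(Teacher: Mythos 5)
Your argument is correct, but it takes a genuinely different route from the paper, which gives no proof of \th\ref{prop:restriction-induces-bis-map} at all: there the statement is presented as a reformulation of \cite[Proposition 5.10]{lawsonvdovina2019} (see also \cite{lenz2008}) and imported from the noncommutative Stone duality literature. Your direct construction --- $\res_A(U) = U \cap \G|_A$, the vanishing lemma $\pi(U) = 0$ for $U \cap \G|_A = \emptyset$ obtained from $\pi(\supp U) = 0$, fibre-constancy of $\pi$ along $\res_A$, and extension to all of $\Gamma(\G|_A)$ via finite orthogonal decompositions into restricted bisections --- has the merit of being self-contained and of showing exactly where the hypotheses enter: invariance of $A$ is what makes $\res_A$ multiplicative and what converts $U \cap \G|_A = \emptyset$ into $\supp U \cap A = \emptyset$, while closedness of $A$ (together with Hausdorffness of $\G$, the paper's standing assumption, which makes compact opens clopen and hence set differences such as $U_1 \setminus U_2$ again compact open) gives compactness of the restricted bisections. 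The price is the refinement bookkeeping you correctly identify as the technical core, and three points there should be made explicit. First, ``homomorphism'' must be read throughout as morphism of Boolean inverse semigroups in the sense of Section~\ref{sec:inverse-semigroups}, since both the well-definedness step (splitting $U_i$ as $(U_1 \cap U_2) \sqcup (U_i \setminus U_{3-i})$) and the uniqueness of the dashed arrow use preservation of orthogonal joins. Second, the disjointified pieces $U_i' = U_i \setminus (U_1 \cup \dotsb \cup U_{i-1})$ are in general \emph{not} pairwise orthogonal in $\Gamma(\G)$ --- disjoint bisections can share sources or ranges --- only their restrictions to $\G|_A$ are, being disjoint subsets of the single bisection $V$; consequently the orthogonality of the elements $\pi(U_i')$ in $B$, which is needed for $\bigvee_i \pi(U_i')$ to be defined, has to be deduced from the factorization hypothesis applied to the idempotents $\supp U_i' \cap \supp U_j'$ and $\im U_i' \cap \im U_j'$ rather than from orthogonality upstairs. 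Third, independence of the chosen decomposition does reduce to common refinements plus fibre-constancy as you say, but it invokes the vanishing lemma once more, applied to $U_i \setminus (W_1 \cup \dotsb \cup W_m)$. With these points filled in, the proof is complete.
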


The following converse to \th\ref{prop:restriction-induces-bis-map} can be considered a reformulation of \cite[Lemma 5.6]{lawsonvdovina2019}.
\begin{lemma}
  \th\label{lem:restriction-implies-invariant}
  Let $\G$ be an ample groupoid and $A \subseteq \Gnought$ a closed subset, such that the restriction $\mathrm{res}_A: \mathrm{CO}(\Gnought) \to \mathrm{CO}(A)$ extends to a map of inverse semigroups $\Gamma(\G) \to B$, for some inverse semigroup $B$.  Then $A$ is $\G$-invariant.
\end{lemma}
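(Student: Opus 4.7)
The plan is to argue by contradiction. Suppose $A$ is not $\G$-invariant; then, after swapping the roles of source and range if necessary, there is some $g \in \G$ with $d(g) \in A$ but $r(g) \notin A$.

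The first step is to produce a compact open bisection $U \subseteq \G$ around $g$ whose range avoids $A$ while its source still meets it. Since $\G$ is ample, I can start with any compact open bisection containing $g$; then, using that $A$ is closed and $r$ is continuous, I replace it by its intersection with $r^{-1}(V)$ for a compact open neighborhood $V \subseteq \Gnought \setminus A$ of $r(g)$. The resulting $U$ is a compact open bisection with $g \in U$, $r(U) \cap A = \emptyset$, and $d(g) \in d(U) \cap A$.

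The second step is a short algebraic computation inside $B$. Writing $\phi \colon \Gamma(\G) \to B$ for the extension of $\res_A$, and using that $\emptyset$ is absorbing in $\Gamma(\G)$ so that $\phi(\emptyset)$ is absorbing on the image of $\phi$, I compute
\[
  \phi(U)\phi(U)^* \;=\; \phi(UU^*) \;=\; \phi(r(U)) \;=\; \res_A(r(U)) \;=\; \phi(\emptyset).
\]
Multiplying by $\phi(U)$ on the right gives $\phi(U) = \phi(UU^*U) = \phi(\emptyset)\phi(U) = \phi(\emptyset)$, and hence $\phi(U)^*\phi(U) = \phi(\emptyset)$. On the other hand, the same extension rule yields
\[
  \phi(U)^*\phi(U) \;=\; \phi(U^*U) \;=\; \phi(d(U)) \;=\; \res_A(d(U)) \;=\; d(U) \cap A
\]
as an element of $\CO(A) \subseteq B$, and this set contains $d(g)$, so is nonempty.

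The mild subtlety on which the whole argument hinges is that these two values of $\phi(U)^*\phi(U)$ must actually clash, i.e.\ that the extension $\phi$ separates $\emptyset$ from nonempty elements of $\CO(A)$ inside $B$. I would read this as part of what ``extends'' means in the statement, consistent with the universal property formulation in \th\ref{prop:restriction-induces-bis-map}; once this is granted, the contradiction is immediate and no further work is required.
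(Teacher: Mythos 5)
Your proof is correct and rests on the same computation as the paper's: the identity $\phi(U^*U)=\phi(U)^*\phi(UU^*)\phi(U)$ together with the fact that $\res_A$ detects empty intersection with $A$ forces the source and range of a compact open bisection to meet $A$ simultaneously. The paper phrases this as a direct contrapositive (if $\supp s\cap A=\emptyset$ then $\im s\cap A=\emptyset$) rather than a contradiction, and it likewise implicitly uses the ``separating'' reading of \emph{extends} that you flag, so your argument is essentially identical, just spelled out in more topological detail.
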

\begin{proof}
  Denote by $\pi: \Gamma(\G) \to B$ a map as in the statement of the lemma.  Take $s \in \Gamma(\G)$ satisfying $\supp(s) \cap A = \emptyset$.  Then the calculation
  \begin{gather*}
    \mathrm{res}_A(ss^*) = \pi(ss^*) = \pi(s) \pi(s^*s) \pi(s^*) = 0
  \end{gather*}
  shows that $\im(s) \cap A = \emptyset$. So $A$ is indeed $\G$-invariant.
\end{proof}

\subsubsection*{Minimal groupoids and $0$-simplifying Boolean inverse semigroups}

We introduce the algebraic notion corresponding to minimality of groupoids following \cite{steinbergszakacs2020}.
\begin{definition}
  \th\label{def:zero-simplifying}
  Let $B$ be a Boolean inverse semigroup.  An ideal $I$ of $B$ is called \emph{additive}, if it is closed under joins of orthogonal elements.  Further, $B$ is called \emph{$0$-simplifying} if its only additive ideals are $\{0\}$ and $B$.
\end{definition}
The name of the notation $0$-simplifying stems from the fact that additive ideals are exactly of the form $\pi^{-1}(0)$ for homomorphisms of Boolean inverse semigroups $B \to C$.

\begin{proposition}[{\cite[Proposition 2.7]{steinbergszakacs2020}}]
  \th\label{prop:0simplifying}
  Let $\G$ be an ample Hausdorff groupoid. Then $\G$ is minimal if and only if $\Gamma(\G)$ is $0$-simplifying
\end{proposition}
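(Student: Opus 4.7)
The plan is to establish the equivalence via a correspondence between additive ideals of $\Gamma(\G)$ and open $\G$-invariant subsets of $\Gnought$. To a nonempty open invariant subset $V \subseteq \Gnought$ I would associate $I_V = \{s \in \Gamma(\G) \mid s \subseteq \G|_V\}$, and to an additive ideal $I$ of $\Gamma(\G)$ the set $U_I = \bigcup \{e \in I \cap E(\Gamma(\G))\}$, viewing idempotents of $\Gamma(\G)$ as compact open subsets of $\Gnought$.

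First I would verify that $I_V$ is an additive ideal. The key point is that $\G$-invariance of $V$ is precisely what ensures that multiplication on either side by an arbitrary compact open bisection sends $I_V$ into itself: if $s \in I_V$ and $t \in \Gamma(\G)$, every arrow $gh \in st$ has $r(gh) = r(g) \in V$, and by invariance also $d(gh) \in V$. Closure under orthogonal joins is immediate from the description of such joins as disjoint unions of bisections. Symmetrically, $U_I$ is open, and its $\G$-invariance follows by conjugating an idempotent $e \in I$ containing $d(g)$ by a compact open bisection $t$ containing $g$: the idempotent $t e t^* \in I$ then contains $r(g)$.

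For the direction that $\Gamma(\G)$ being $0$-simplifying implies $\G$ is minimal, I would take a nonempty open $\G$-invariant subset $V \subseteq \Gnought$; ampleness provides a nonzero compact open subset of $V$, so $I_V$ is a nonzero additive ideal, hence equal to $\Gamma(\G)$. Since $\Gnought$ is covered by its compact open subsets, this forces $V = \Gnought$.

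For the converse, given a nonzero additive ideal $I$ of $\Gamma(\G)$, the set $U_I$ is a nonempty open $\G$-invariant subset, so by minimality $U_I = \Gnought$. The crux is then to show $I = \Gamma(\G)$. Given $s \in \Gamma(\G)$, the compact set $\supp(s)$ is covered by finitely many idempotents $e_1, \ldots, e_n \in I$, which I would refine to a pairwise orthogonal family $f_1, \ldots, f_n \in I$ with $f_i \leq \supp(s)$ and $\bigsqcup_i f_i = \supp(s)$, using relative complements in the generalized Boolean algebra $E(\Gamma(\G))$ together with the fact that any $t \leq u \in I$ satisfies $t = u \cdot \supp(t) \in I$. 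Then $s = \bigsqcup_i s f_i$ is an orthogonal join of elements of $I$, so $s \in I$ by additivity. This refinement step is the main obstacle; everything else reduces to routine bookkeeping within the Boolean inverse semigroup structure.
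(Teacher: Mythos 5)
Your argument is correct: the correspondence $V \mapsto I_V$, $I \mapsto U_I$ between nonempty open $\G$-invariant subsets of $\Gnought$ and nonzero additive ideals of $\Gamma(\G)$, together with the orthogonal-refinement step showing a nonzero additive ideal with $U_I = \Gnought$ must be all of $\Gamma(\G)$, is exactly the standard proof of this equivalence. The paper itself gives no proof here but only cites \cite[Proposition 2.7]{steinbergszakacs2020}, and your route coincides with the argument in that reference.
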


\subsection{CCR and type I groupoids}
\label{sec:ccr-type-I}

We refer the reader to \cite{murphy90} for the basic theory of \Cstar-algebras.  To each of the objects considered in Sections \ref{sec:inverse-semigroups} and \ref{sec:ample-groupoids} one can associate a \Cstar-algebra.  For groupoid \Cstar-algebras $\Cstar(\G)$, we refer the reader to \cite{renault80}.  The \Cstar-algebras $\Cstar(S)$ associated with inverse semigroups are explained in \cite{kumjian1984, paterson1999}.  In particular, it is known that $\Cstar(S) \cong \Cstar(\G(S))$ canonically.  We refer to \cite[Section 5.6]{murphy90} for details on the following two notions from representation theory of \Cstar-algebras.

\begin{definition}
  \th\label{def:ccr-type-I}
  Let $A$ be a \Cstar-algebra.  Then $A$ is called \emph{CCR} if the image of every irreducible *-representation of $A$ equals the compact operators.  We call $A$ \emph{GCR} or \emph{type I} if the image of every irreducible *-representation of $A$ contains the compact operators.

  Inverse semigroups and a groupoids are called CCR or type I, respectively, if their \Cstar-algebras have this property.
\end{definition}

In this article, the notions of CCR and type I are accessed solely through the following special case of a result of Clark combined with the characterisation of discrete type I groups by Thoma.
\begin{theorem}[{\cite[Theorems 1.3 and 1.4]{clark07} and \cite{thoma68}}]
  \th\label{thm:clark}
  Let $\G$ be a second countable, {\'e}tale, Hausdorff groupoid.   Then $\G$ is CCR if and only if all the isotropy groups are virtually abelian and the orbit space of $\G$ is \Tone.   Further,  $\G$ is type I if and only if all the isotropy groups are virtually abelian and the orbit space of $\G$ is \Tzero.  
\end{theorem}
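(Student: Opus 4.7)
The plan is to combine the two cited results directly. Clark's Theorems 1.3 and 1.4 of \cite{clark07} state that for a second countable {\'e}tale Hausdorff groupoid $\G$, the property of being CCR (respectively type I) is equivalent to every isotropy group $\G|_x$ being CCR (respectively type I) together with the orbit space being \Tone{} (respectively \Tzero). Since $\G$ is {\'e}tale, each isotropy group is a discrete group, so the theorem reduces to showing that a discrete group is CCR, or equivalently type I, precisely when it is virtually abelian.

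By Thoma's theorem \cite{thoma68}, a discrete group has type I group \Cstar-algebra if and only if it is virtually abelian. Conversely, for a virtually abelian discrete group $G$ with abelian subgroup $A$ of finite index, Mackey's normal subgroup analysis shows that every irreducible unitary representation of $G$ is finite dimensional, with dimension bounded by $[G:A]$. Such a representation has image contained in the compact operators, so $G$ is CCR. Consequently, for discrete groups the three conditions CCR, type I, and virtually abelian coincide. Substituting the virtually abelian condition for the isotropy hypothesis in Clark's two characterisations yields both halves of the theorem. There is no real obstacle in this argument, as it is a direct assembly of the cited results together with the standard representation-theoretic consequence of virtual abelianness.
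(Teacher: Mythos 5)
Your proposal is correct and takes essentially the same route as the paper, which offers no proof of its own but simply cites Clark's Theorems 1.3 and 1.4 together with Thoma's characterisation of discrete type I groups; your assembly of these results, including the observation that for discrete groups CCR, type I and virtually abelian all coincide, is exactly the intended argument.
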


\section{Separation properties of orbit spaces}
\label{sec:separation-properties}

In this section we consider separation properties of orbit spaces and provide algebraic characterisations of ample groupoids whose orbit space is a \Tone-space and a \Tzero-space, respectively.

Let us start by introducing the simplest example of a groupoid whose orbit space is not \Tone.
\begin{example}
  \th\label{ex:btone}
  Let $\G_\Tone$ be the groupoid associated with the equivalence relation $\{(n,m) \in (\N \cup \{\infty\})^2 \mid n + m < \infty\}$.  Denote by $B_\Tone = \Gamma(\G_\Tone)$ the Boolean inverse semigroup of compact open bisections of $\G_\Tone$.  Then $B_\Tone$ admits a presentation with generators $(s_n)_{n \in \N}$ and $f$ satisfying the relations
  \begin{align*}
    & \im s_n = \supp s_{n+1} & \text{ for all } n \\
    & \supp s_n \perp \supp s_{n + 1} & \text{ for all } n \neq m \\
    & f^2  = f \geq \supp s_n & \text{ for all } n
  \end{align*}
\end{example}

\begin{proposition}
  \th\label{prop:tone}
Let $\G$ be an ample Hausdorff groupoid. Then the orbit space of $\G$ is not $\Tone$ if and only if $\Gamma(\G)$ has $B_{\Tone}$ as a subquotient.
\end{proposition}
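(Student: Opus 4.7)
The plan is to prove both directions by recasting them via noncommutative Stone duality. Recall from Section~\ref{sec:nc-stone-duality} that every Boolean inverse subsemigroup $T \subseteq \Gamma(\G)$ is of the form $T = \Gamma(\cH)$ for an open subgroupoid $\cH \subseteq \G$, and that a surjection $T \thra B_\Tone$ has an additive kernel corresponding, via Proposition \th\ref{prop:restriction-induces-bis-map}, to a closed $\cH$-invariant subset $A \subseteq \cH^{(0)}$ with $\cH|_A \cong \G_\Tone$. Since the unit space of $\G_\Tone$ is the one-point compactification $\N \cup \{\infty\}$, such an $A$ is always compact and therefore closed in the Hausdorff space $\Gnought$; I shall write $z \in A$ for the point corresponding to $\infty$.

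For the backward implication, the key point is that the restriction $\G|_A$ (with the subspace topology from $\G$) is an ample Hausdorff groupoid whose unit space $A$ has $z$ as its unique non-isolated point. Any compact open bisection $s$ of $\G|_A$ containing an arrow at $z$ induces a homeomorphism $d(s) \to r(s)$ between compact open neighbourhoods of $z$ in $A$; because the unique limit point must map to the unique limit point under a homeomorphism, $z$ is necessarily fixed. Hence no arrow of $\G|_A$ connects $z$ to a point of $A \setminus \{z\}$, so $\{z\}$ is its own $\G|_A$-orbit. As $\G|_A$-orbits are exactly $\G$-orbits intersected with $A$, for any $x \in A \setminus \{z\}$ the orbit $\G x$ contains $A \setminus \{z\}$ but not $z$, while $z \in \overline{A \setminus \{z\}}$ (the closure computed in $\Gnought$). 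Therefore $\G x$ is not closed and the orbit space of $\G$ is not $\Tone$.

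For the forward implication, pick a non-closed orbit $\G x$ and a point $y \in \overline{\G x} \setminus \G x$. Using first countability (available under the standing second-countability assumption), extract distinct $x_n \in \G x$ with $x_n \to y$, and pick $g_n \in \G$ with $d(g_n) = x_n$ and $r(g_n) = x_{n+1}$. Choose a compact open neighbourhood $F$ of $y$ and pairwise disjoint compact open neighbourhoods $V_n \subseteq F$ of the $x_n$ with $y \notin V_n$, and shrink compact open bisections $s_n \ni g_n$ so that $d(s_n) \subseteq V_n$ and $r(s_n) \subseteq V_{n+1}$. Let $T \subseteq \Gamma(\G)$ be the Boolean inverse subsemigroup generated by $F$ and $\{s_n\}_n$, corresponding to an open subgroupoid $\cH \subseteq \G$ with $\cH^{(0)} = F$. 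The subset $A = \{y\} \cup \{x_n : n \in \N\}$ is homeomorphic to $\N \cup \{\infty\}$ and closed in $\Gnought$. The crucial observation is that $V_n \cap A = \{x_n\}$, so each generator $s_n$ restricts on $A$ to the single arrow $g_n \colon x_n \to x_{n+1}$; consequently $A$ is $\cH$-invariant and $\cH|_A$ is exactly the transitive equivalence relation on $\{x_n\}$ together with the isolated unit $y$, that is, $\cH|_A \cong \G_\Tone$. Proposition \th\ref{prop:restriction-induces-bis-map} then yields a morphism $T \to \Gamma(\cH|_A) \cong B_\Tone$, which is surjective because the generators of $B_\Tone$ are precisely the images of $F$ and the $s_n$ under restriction.

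The main obstacle lies in the forward direction's identification $\cH|_A \cong \G_\Tone$: one must verify that $\cH|_A$ has no spurious arrows beyond those prescribed by the chosen generators, which is exactly what the disjointness $V_n \cap A = \{x_n\}$ enforces by collapsing every restricted generator to a single arrow and preventing any non-trivial isotropy. The subtle step in the backward direction is the passage from $A$ being closed in $\cH^{(0)}$ to $A$ being closed in all of $\Gnought$, which rests on the compactness of the one-point compactification.
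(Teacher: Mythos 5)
Your forward implication is sound and essentially the paper's argument: extract a sequence $(x_n)$ in a non-closed orbit converging to a point outside it, connect consecutive terms by bisections with pairwise disjoint, small supports, and restrict the generated subsemigroup to the closed invariant set $A=\{y\}\cup\{x_n\}$ via \th\ref{prop:restriction-induces-bis-map}. (Both you and the paper implicitly use that a point of $\overline{\G x}\setminus\G x$ is a sequential limit from $\G x$, which needs a countability hypothesis not present in the statement; you at least flag this.)

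The backward implication, however, has a genuine gap at its very first step. You assume that a surjection $T\thra B_\Tone$ is dual to a restriction to a closed invariant subset $A\subseteq\Gnought$ homeomorphic to $\N\cup\{\infty\}$, with $\cH|_A\cong\G_\Tone$. Neither part is justified. First, quotients of Boolean inverse semigroups are not determined by their additive kernel: from \th\ref{prop:restriction-induces-bis-map} and \th\ref{lem:restriction-implies-invariant} you only get a factorization $T\to\Gamma(\cH|_A)\thra B_\Tone$ where the second map is bijective on idempotents but need not be injective, so $\cH|_A$ may have many more arrows than $\G_\Tone$. Second, and more fundamentally, the closed set dual to the idempotent surjection lives in $\widehat{E(T)}$, which is $\cH^{(0)}$ only when $T$ contains all compact open subsets of $\cH^{(0)}$; a general subsemigroup in a subquotient does not. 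The paper's own \th\ref{rem:tone-no-corner} is a counterexample to your reduction: for the groupoid on $\beta\N$ described there, $B_\Tone$ embeds as a subsemigroup of $\Gamma(\G)$ (take the finite--cofinite idempotent algebra together with the singleton bisections $\{(n,n+1)\}$), yet $\beta\N$ contains no nontrivial convergent sequence, so no closed subset of $\Gnought$ is homeomorphic to $\N\cup\{\infty\}$ and $\G_\Tone$ arises as no restriction of $\G$. Even where your $A$ does exist, the claim that $z$ is fixed fails: an arrow $g\in\G$ with $d(g)=z$ lies in a compact open bisection of $\G$ whose induced homeomorphism need not preserve $A$, and $\G|_A$ need not be \'etale, so "limit points map to limit points" does not apply; $z$ can perfectly well lie in $\G x_0$. (That last defect is repairable --- a self-accumulating orbit is not locally closed, hence not closed, by \th\ref{prop:tzero-characterisations} --- but you do not make that argument.) The paper avoids all of this by arguing purely algebraically: it lifts the generators of $B_\Tone$ to $\Gamma(\G)$, orthogonalizes their supports below the lift of $f$, and uses the finite intersection property of the compact sets corresponding to $q_n=t_n^*(\supp s_n)t_n$ to produce a single unit $x_0$ whose orbit meets infinitely many disjoint subsets of a fixed compact set, hence accumulates.
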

\begin{proof}
  Assume that the orbit space of $\G$ is not $\Tone$.  Then by \th\ref{prop:tzero-characterisations} there is some non-closed orbit, say $\G x \subseteq \Gnought$.  Let $(x_n)_{n \in \N}$ be a convergent sequence from $\G x$ whose limit $x_\infty = \lim x_n$ does not lie in $\G x$.  Since $\G$ is ample, there are bisections $(s_n)_{n \in \N}$ in $\Gamma(\G)$ such that $s_n \cap d^{-1}(x_n)  \cap r^{-1}(x_{n+1}) \neq \emptyset$ for all $n \in \N$.  Without loss of generality, we may assume that $(\supp s_n)_n$ are pairwise disjoint subsets of $\Gnought$.  Let $B \subseteq \Gamma(\G)$ be the Boolean inverse semigroup generated by all $(s_n)_{n \in \N}$ together with the idempotents of $\Gamma(\G)$.  Denote by $\cH = \bigcup B$ the open subgroupoid of $\G$ associated with $B$.  The subset $A = \{x_n \mid n \in \N \cup \{\infty\}\} \subseteq \Gnought$ is $\cH$-invariant, so that by \th\ref{prop:restriction-induces-bis-map} there is a quotient map of Boolean inverse semigroups $B \cong \Gamma(\cH) \to \Gamma(\cH|_A)$.  If suffices to note that $\cH|_A \cong \cG_{\Tone}$ so that $\Gamma(\cH|_A) \cong B_{\Tone}$ follows.

  Assume now that $B_\Tone$ is a subquotient of $\Gamma(\G)$.  Denote by $(s_n)_{n \in \N}$ and $f$ preimages in $\Gamma(\G)$ of the generators of $B_\Tone$.  Replacing $s_n$ by $s_n f$, we may suppose that $\supp s_n \leq f$ holds for all $n \in \N$.  Further, writing $p_n = \bigvee_{k < n} \supp s_k$ and replacing $s_n$ by $s_n(f - p_n)$, we may assume that $(\supp s_n)_n$ are pairwise orthogonal.  Write $t_n = s_{n-1} \dotsm s_0$ and $q_n = t_n^* (\supp s_n) t_n$.  Then every finite subfamily of $(q_n)_{n \in \N}$ has a non-zero meet, since the same statement holds true for their images in $B_\Tone$.  Denoting by $U_n \subset \Gnought$ the compact open subset corresponding to $q_n$, it follows that $\bigcap_{n \in \N} U_n \neq \emptyset$.  Choose $x_0$ in there and define $x_n = t_n x_0 \in \supp s_n$.  Since $\supp s_n \leq f$ for all $n \in \N$, there is a convergent subsequence $(x_{n_k})_k$ of $(x_n)$.  So the orbit $\G x_0$ has an accumulation point, which proves that the orbit space of $\G$ is not $\Tone$.
\end{proof}

\begin{remark}
  \th\label{rem:tone-no-corner}
  Comparing the statement of \th\ref{prop:tone} with \th\ref{prop:tzero}, it is natural to ask whether it is possible to find a corner of $\Gamma(\G)$ that has $B_\Tone$ as a quotient, rather than finding $B_\Tone$ as a subquotient of $\Gamma(\G)$.  This is not possible as the following example show.  We consider the groupoid $\G$ arising from the equivalence relation on $\beta \N$, the Stone-{\u C}ech compactification of $\N$, that relates all elements from $\N$, and nothing else. It is straight forward to check that $\G$ is an ample groupoid, since every point of $\N$ is isolated in $\beta \N$.  Corners of $\Gamma(\G)$ correspond to restrictions of $\G$ to compact open subsets, as explained in Section \ref{sec:nc-stone-duality}.  Compact open subsets of $\beta \N$ are exactly of the form $U = \overline{D}$ for subsets $D \subset \N$.  Clearly if $D$ is finite, $\G_\Tone$ cannot arise as a restriction of $\G|_U = \G|_D$.  But every infinite subset of $\N$ has infinitely many accumulation points in $\beta \N$, so $\G_\Tone$ cannot arise as a restriction of $\G$ at all.
\end{remark}

\begin{proposition}
  \th\label{prop:tzero}
  Let $\G$ be a second countable, ample Hausdorff groupoid. Then the following statements are equivalent.
  \begin{itemize}
  \item The orbit space of $\G$ is not $\Tzero$.
  \item A corner of $\Gamma(\G)$ has an infinite, monoidal and $0$-simplifying quotient.
  \item There is an infinite, monoidal and $0$-simplifying subquotient of $\Gamma(\G)$.
  \end{itemize}
\end{proposition}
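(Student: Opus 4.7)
The plan is to prove the three equivalences in the cyclic order (2) $\Rightarrow$ (3) $\Rightarrow$ (1) $\Rightarrow$ (2).

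The implication (2) $\Rightarrow$ (3) is immediate, since a corner $p\Gamma(\G)p$ is a Boolean inverse subsemigroup of $\Gamma(\G)$, so any quotient of a corner is automatically a subquotient.

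For (3) $\Rightarrow$ (1), let $C$ be an infinite, monoidal, $0$-simplifying subquotient of $\Gamma(\G)$. Using the correspondence in Section~\ref{sec:nc-stone-duality} between Boolean inverse subsemigroups of $\Gamma(\G)$ and open subgroupoids of $\G$, $C$ arises as a quotient of $\Gamma(\cG')$ for some open subgroupoid $\cG' \subseteq \G$. Combining \th\ref{prop:restriction-induces-bis-map} and \th\ref{lem:restriction-implies-invariant} identifies $C$ with $\Gamma(\cG'|_A)$ for a closed $\cG'$-invariant subset $A$ of the unit space of $\cG'$. Monoidality forces $A$ compact, $0$-simplifying together with \th\ref{prop:0simplifying} forces $\cG'|_A$ minimal, and infiniteness of the idempotent semilattice of $C$ forces $A$ infinite. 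A compact Hausdorff totally disconnected infinite space admits a non-isolated point $z$; by minimality the $\cG'$-orbit of $z$ is dense in $A$ and accumulates at $z$, so by homogeneity of the groupoid action via bisections every point of $\G z$ is an accumulation point of $\G z$. Since orbits of second countable \'etale groupoids are countable and a countable locally closed subset of the Polish space $\Gnought$ has isolated points (by Baire category), $\G z$ cannot be locally closed. Hence the orbit space of $\G$ is not $\Tzero$ by \th\ref{prop:tzero-characterisations}.

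The main work is (1) $\Rightarrow$ (2). Assume the orbit space is not $\Tzero$; by \th\ref{prop:tzero-characterisations} there is an orbit $\G x$ that is not locally closed, so $\G x \subsetneq \overline{\G x}$ and $\G x$ is not open in $\overline{\G x}$. Both $\G x$ and $\overline{\G x} \setminus \G x$ are then dense in $\overline{\G x}$, so $\overline{\G x}$ is perfect. Choose a compact open $U \subseteq \Gnought$ meeting $\overline{\G x}$, and set $Y := U \cap \overline{\G x}$ (compact, perfect, hence infinite, and $\G|_U$-invariant). By Zorn's lemma in the compact space $Y$, there is a minimal non-empty closed $\G|_U$-invariant subset $A \subseteq Y$, on which $\G|_A$ is minimal. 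Then \th\ref{prop:restriction-induces-bis-map} yields a quotient $\Gamma(\G|_U) \to \Gamma(\G|_A)$, with $\Gamma(\G|_U) \cong p\Gamma(\G)p$ being the corner for the idempotent $p$ of $U$, and with $\Gamma(\G|_A)$ monoidal (since $A$ is compact) and $0$-simplifying (by \th\ref{prop:0simplifying}).

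The main obstacle is ensuring that $A$ can be chosen infinite: a priori the Zorn-minimal $A$ could be a finite $\G|_U$-orbit sitting inside $\overline{\G x} \setminus \G x$, since any $\G|_U$-orbit of a point of $\G x \cap U$ equals the infinite set $\G x \cap U$. I would exploit the non-$\Tzero$ hypothesis in its equivalent strong form---two distinct $\G$-orbits with common closure $\overline{\G x}$---to refine $U$ so that every $\G|_U$-orbit in $Y$ is dense in $Y$, whence $Y$ itself is Zorn-minimal and infinite. Concretely, the refinement removes the (at most countably many, by second countability) proper closed $\G$-invariant subsets of $\overline{\G x}$ relevant to $Y$, using total disconnectedness to choose a compact open $U$ in their complement, and a Baire-category exhaustion to verify this complement is nontrivial. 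Making this refinement rigorous is the main technical step; an alternative approach would invoke the Glimm-Effros dichotomy for the orbit equivalence relation of $\G$, which directly produces a Cantor-like compact invariant subset on which the action is minimal.
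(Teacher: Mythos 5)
There are two genuine gaps, one in each of the non-trivial implications. In $(1) \Rightarrow (2)$ you correctly identify the crux --- the Zorn-minimal closed $\G|_U$-invariant subset $A \subseteq Y$ may be a finite orbit --- but you do not close it. The proposed repair rests on the claim that $\overline{\G x}$ has only countably many proper closed $\G$-invariant subsets ``by second countability'', which is false: a second countable space can have continuum many closed subsets, and (say, for a groupoid with trivial dynamics on part of $\overline{\G x}$) continuum many of them can be invariant. The Glimm--Effros alternative is only named, not carried out. The paper avoids Zorn entirely: it starts from a \emph{self-accumulating} orbit (the form of non-\Tzero\ provided by \th\ref{prop:tzero-characterisations}), takes $A = \overline{\G x}$, and uses the comeagerness of the set of points of $A$ with dense $\G|_A$-orbit to locate a compact open $U \subseteq A$ on which $\G|_U$ is minimal, infinite and monoidal. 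So the step you yourself flag as ``the main technical step'' is exactly the content of this direction, and it is missing.

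In $(3) \Rightarrow (1)$ the opening identification is unjustified. A subquotient is a quotient of a Boolean inverse subsemigroup $D \subseteq \Gamma(\G)$; the open subgroupoid $\cG' = \bigcup D$ satisfies $D \subseteq \Gamma(\cG')$ but in general $D \subsetneq \Gamma(\cG')$ (for instance $\rE(D)$ need not separate the points of $\bigcup \rE(D)$), and the given surjection is defined on $D$ only, so $C$ need not be a quotient of $\Gamma(\cG')$, and \th\ref{prop:restriction-induces-bis-map} together with \th\ref{lem:restriction-implies-invariant} does not identify $C$ with $\Gamma(\cG'|_A)$ for any subset $A$ of $\Gnought$. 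The Stone dual of $\rE(D)$ is a \emph{quotient} of $\bigcup \rE(D)$, not a subspace of $\Gnought$. This is precisely why the paper works with the abstract dual groupoid $\cH$ of the subsemigroup (after cutting down by an idempotent preimage $p$ of the unit), finds a self-accumulating $\cH$-orbit inside the closed set $A \subseteq \cH^{(0)}$ dual to the quotient map, and only then transports the conclusion back to $\G|_V$ along the dual surjection $\varphi \colon V \to \cH^{(0)}$. Your closing Baire-category argument (a countable, locally closed, self-accumulating orbit would be Polish with no isolated points, which is impossible) is correct and in fact tightens the paper's corresponding step, but the point $z$ to which it is applied has not been legitimately produced. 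The implication $(2) \Rightarrow (3)$ is indeed immediate, as in the paper.
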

\begin{proof}
  Assume first that the orbit space of $\G$ is not $\Tzero$.  By \th\ref{prop:tzero-characterisations}, there exists a self-accumulating orbit of $\G$.  Denote its closure by $A$.  Then $A$ is a $\G$-invariant subset of $\Gnought$, so that by \th\ref{prop:restriction-induces-bis-map} the restriction to $A$ induces a quotient map $\Gamma(\G) \to \Gamma(\G|_A)$.  The groupoid $\G|_A$ has a dense orbit, so that by \cite[Lemma 3.4]{steinberg2019} the set of its units with a dense orbit is comeager.  In particular, there is a compact open subset $U \subseteq A$ such that every point of $U$ has a dense $\G|_A$-orbit.  Note that since $U \subseteq A$ is open, $\G|_U$ is {\'e}tale.  Further, since $U$ is compact open, $\Gamma(\G|_U)$ is a corner of $\Gamma(\G|_A)$.  We note that $A$ is infinite, since it is self-accumulating, so that also $U$ is infinite.  Hence $\Gamma(\G|_U)$ is infinite.  Further, $\Gamma(\G|_U)$ is monoidal, since $U$ is compact.  It is $0$-simplifying by \th\ref{prop:0simplifying}, since $\G|_U$ is minimal.  If $V \subseteq \Gnought$ denotes any compact open subset such that $V \cap A = U$, then the quotient map $\Gamma(\G) \to \Gamma(\G|_A)$ maps $\Gamma(\G|_V)$ onto $\Gamma(\G|_U)$.  So $\Gamma(\G|_U)$ is a quotient of a corner of $\Gamma(\G)$.

  If $\Gamma(\G)$ has a corner with an infinite, monoidal and $0$-simplifying quotient, then it is a subquotient of $\Gamma(\G)$.  So let us assume that there is an infinite, monoidal and $0$-simplifying subquotient of $\Gamma(\G)$. We will show that the orbit space of $\G$ is not $\Tzero$.  Write $\Gamma(\G) \supset C \thra B$ for the given subquotient.  Choosing an idempotent preimage $p \in \rE(C)$ of the unit of $B$, we obtain a surjection $p C p \thra B$.  Let $V \subset \Gnought$ be the compact open subset corresponding to $p$.  Replacing $C$ by $pCp$, we thus find a unital inclusion $\Gamma(\G|_V) \supset C$ and a quotient map $\pi: C \thra B$.  Let $\cH$ be the ample groupoid associated with $C$ by noncommutative Stone duality, that is $\Gamma(\cH) \cong C$.  Write $X = \cH^{(0)}$.  Considering the restriction of $\pi$ to idempotents, we find a closed subset $A \subseteq X$ such that the following diagram commutes.
  \begin{gather*}
    \xymatrix{
      \CO(X) \ar[r]^{\pi} \ar[d]_{\res_A} & \rE(B) \\
      \CO(A) \ar@{-->}[ru]_{\cong}
      }
    \end{gather*}
    Let $x \in A$.  We will show that $\cH x \cap A$ is self-accumulating.  By noncommutative Stone duality, there is an ample groupoid $\cK$ such that $\Gamma(\cK) \cong B$.  From the fact that $B$ is infinite, monoidal and 0-simplifying, it follows that $\cK$ has is infinite, has a compact unit space and is minimal.  In particular, the orbits of $\cK$ are self-accumulating.  So for every compact open neighbourhood $U \subseteq A$ of $x$, there is some bisection $t \in \Gamma(\cK)$ such that $x \in \supp t$ and $x \notin \im t \leq U$.  Let $s \in \pi^{-1}(t)$ denote a preimage of $t$.  Then $\supp s \cap A = \pi(\supp s) = \supp \pi(s) = \supp t$ and similarly $\im s \cap A = \im t$.  It follows that $\cH x \cap A$ and thus also the $\cH$-orbit of $x$ is self-accumulating.  Thus the orbit space of $\cH$ is not \Tzero.  Consider now the surjective map $\varphi: V \to X$ which is dual to the inclusion $\CO(X) \subset \CO(V)$.  Given $x,y \in X$ in the same $\cH$-orbit, there is $s \in C$ such that $sx = y$.  Let $u \in V$ be some preimage of $x$ under $\varphi$.  Considering $s$ as a bisection of $\G|_V$, we define $v = su$, which lies in the same $\G|_V$-orbit as $u$ and satisfies $\varphi(v) = y$.  This shows that every $\cH$-orbit is contained in the image of a $\G|_V$-orbit.  In particular, there is some orbit of $\G|_V$ that is not finite and hence not locally closed.  So \th\ref{prop:tzero-characterisations} says that the orbit space of $\G$ is not \Tzero.    
\end{proof}

\section{Group quotients and isotropy groups}
\label{sec:isotropy-groups}

In this section, we relate the isotropy groups of an ample groupoid with certain subquotients of the Boolean inverse semigroup of its compact open bisections.  This result will allow us to address the condition on isotropy groups from \cite{clark07}.

\begin{proposition}
  \th\label{prop:isotropy-groups-characterisation}
  Let $\G$ be a second countable, ample Hausdorff groupoid whose orbit space is \Tzero.  Let $x \in \Gnought$ be a unit, write $G =\G|_x$ for the isotropy group at $x$ and denote by $G_0$ the associated group with zero.  Then $G_0$ is a quotient of a corner of $\Gamma(\G)$.  Vice verse, if $\G$ is any ample Hausdorff groupoid and $G$ is a group such that $G_0$ is a quotient of a corner of $\G$, then $G$ is a quotient of a point stabiliser of $\G$.
\end{proposition}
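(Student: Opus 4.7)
For the forward direction the crucial intermediate claim is that the orbit $\G x$ is discrete as a subspace of $\Gnought$. Since $\G$ is \'etale, the fiber $\G_x = d^{-1}(x)$ is discrete, and second countability forces it to be countable; hence so is $\G x = r(\G_x)$. By \th\ref{prop:tzero-characterisations}, the \Tzero{} hypothesis forces $\G x$ to be locally closed in $\Gnought$, so $\G x$ inherits a locally compact Hausdorff topology and is in particular a Baire space. A countable Baire space must contain an isolated point $y$. Conjugating by a compact open bisection containing an arrow from $x$ to $y$, which exists by ampleness, transports the isolation to $x$, so $\{x\}$ is open in $\G x$. A further application of ampleness yields a compact open $V \ni x$ in $\Gnought$ with $V \cap \G x = \{x\}$, so that $\{x\}$ is closed in $V$ and $\G|_V$-invariant. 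Now \th\ref{prop:restriction-induces-bis-map} produces a surjection $p \Gamma(\G) p = \Gamma(\G|_V) \thra \Gamma(\G|_{\{x\}}) = G_0$, where $p$ is the idempotent corresponding to $V$; the final identification uses that compact open bisections of a discrete group reduce to the empty set and singletons. Surjectivity is immediate since every $g \in G$ lies in some compact open bisection of $\G|_V$.

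For the reverse direction, let $\pi : p \Gamma(\G) p \thra G_0$ be the given surjection, with $p$ corresponding to a compact open $V \subseteq \Gnought$. Its restriction to idempotents is a nonzero character of $\CO(V)$, hence evaluation at a unique point $y \in V$. \th\ref{lem:restriction-implies-invariant} forces $\{y\}$ to be $\G|_V$-invariant, so that \th\ref{prop:restriction-induces-bis-map} factorises $\pi$ through $\Gamma(\G|_{\{y\}}) = H_0$, where $H = \G|_y$ is the isotropy group at $y$. The resulting surjection $H_0 \thra G_0$ restricts to a surjective group homomorphism $H \thra G$: if some $h \in H$ were sent to $0$ then the identity $e_H = h h^{-1}$ would also be sent to $0$, and the whole homomorphism would collapse, contradicting surjectivity onto $G_0$.

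The main obstacle, as I see it, is the discreteness of $\G x$. It is the only step that genuinely exploits second countability, and it combines a Baire category argument on the countable locally closed space $\G x$ with the use of bisections to propagate isolation from some point of $\G x$ to $x$ itself. All remaining steps are direct applications of the noncommutative Stone duality results already established earlier in the excerpt.
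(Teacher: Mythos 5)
Your proposal is correct and follows essentially the same route as the paper: isolate $x$ in its orbit using local closedness, restrict to a compact open $V$ with $V \cap \G x = \{x\}$, and apply \th\ref{prop:restriction-induces-bis-map} (resp.\ \th\ref{lem:restriction-implies-invariant} and the universal property for the converse). Your Baire-category-plus-homogeneity argument for discreteness of the orbit is in fact slightly more careful than the paper's terse ``countable and compact, hence finite'' step, which implicitly relies on the same homogeneity.
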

\begin{proof}
  Since the orbit space of $\G$ is assumed to be {\Tzero} and $\G$ is second countable, its orbits are locally closed by \th\ref{prop:tzero-characterisations}.  Let $U \subseteq \Gnought$ be a compact open neighbourhood of $x$, such that $\G x \cap U$ is closed in $U$ and hence compact.  Since $\G$ is {\'e}tale, we know that $\G x$ is countable, so that $\G x \cap U$ is actually finite.  We may thus shrink $U$ so that $\G x \cap U = \{x\}$ holds.  Denote by $p = U \in \Gamma(\G)$ the idempotent bisection associated with $U$. Then the  corner of $\Gamma(\G)$ can be identified as $p \Gamma(\G) p = \Gamma(\G|_U)$.  Since $x$ is fixed by $\G|_U$, the restriction from $U$ to $\{x\}$ induces a quotient of Boolean inverse semigroups $\Gamma(\G|_U) \to \Gamma(\G|_x) = (\G|_x)_0$ by \th\ref{prop:restriction-induces-bis-map}.

  Let us know assume that $\G$ is any ample Hausdorff groupoid, let $G$ be a group and $p \in \Gamma(\G)$ an idempotent, for which there is a quotient map $\pi: p\Gamma(\G)p \thra G_0$.  Denote by $U \subseteq \Gnought$ the compact open subset corresponding to $p$.  Then there is a natural isomorphism $p \Gamma(\G) p \cong \Gamma(\G|_U)$.  Since the algebra of idempotents of $G_0$ is trivial, $\pi|_{\rE(\Gamma(\G|_U))}$ is a character.  By Stone duality, there is $x \in U$ such that $\pi|_{\rE(\Gamma(\G|_U))} = \ev_x$.  In particular, $\{x\} \subseteq U$ is a $\G|_U$-invariant subset by Lemma \ref{lem:restriction-implies-invariant}.  So by the universal property of the restriction map described in \th\ref{prop:restriction-induces-bis-map}, the homomorphism $\pi$ factors through $\Gamma(\G|_U) \to \Gamma(\G|_x) = (\G_x)_0$.  So $G_0$ is a quotient of $(\G|_x)_0$, which implies that $G$ is a quotient of $\G|_x$.
\end{proof}

\begin{example}
  \th\label{ex:subquotient-groups-not-sufficient}
  It might be tempting to admit arbitrary subquotients of $\Gamma(\G)$ in the statement of Proposition \ref{prop:isotropy-groups-characterisation}, however this does not even suffice under the condition that the orbit space of $\G$ is \Tone.  Indeed, the topological full group of $\G$ is always a subgroup of $\Gamma(\G)$, and it can be large even if $\G$ is effective.  For example the topological full group associated with $B_\Tone$ is $\Sym(\N)$.
\end{example}

We next formulate an appropriate version of Proposition \ref{prop:isotropy-groups-characterisation} for inverse semigroups.  Let us start with a short lemma relating quotients of an inverse semigroup and its booleanization.
\begin{lemma}
  \th\label{lem:quotient-semigroup-Boolean-envelop}
  Let $S$ be an inverse semigroup and $B(S) \thra G_0$ a quotient of its booleanization.  Then $S_0 \to G_0$ is surjective.
\end{lemma}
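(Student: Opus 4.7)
The plan is to combine the concrete description of $B(S)$ given in Section~\ref{sec:inverse-semigroups} with the trivial structure of idempotents in a group with zero. Writing $\pi: B(S) \thra G_0$ for the given surjection, the goal is to show that every $g \in G_0$ already has a preimage under the composition $S_0 \to B(S) \thra G_0$.

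First I would fix $g \in G_0$, pick any preimage $b \in B(S)$, and represent it as $b = [\sum_i s_i e_i]$ with $s_i \in S$ and $e_i \in I(S) = F_2[E(S)]$ satisfying the pairwise orthogonality conditions built into the definition of $C(S)$. Since $\pi$ is a morphism of Boolean inverse semigroups, it preserves joins of orthogonal families, so $\pi(b) = \bigvee_i \pi(s_i)\pi(e_i)$ in $G_0$.

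The key observation is that $E(G_0) = \{0,1\}$, so each $\pi(e_i)$ is either $0$ or $1$, which reduces each term $\pi(s_i)\pi(e_i)$ to either $0$ or $\pi(s_i)$. Each term of the join therefore lies in the image of $S_0 \to G_0$. Moreover, since $\pi$ preserves orthogonality, the surviving nonzero terms are pairwise orthogonal in $G_0$; but two nonzero elements $g,h \in G$ of a group with zero can never be orthogonal, because $gh^{-1}$ is a unit, hence nonzero. Thus at most one term is nonzero, forcing $\pi(b) \in \{0\} \cup \{\pi(s_i) : i\} \subseteq \pi(S_0)$.

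There is no real obstacle beyond unwinding the concrete booleanization; the core idea is that the Boolean structure collapses completely on passing to a group with zero. The only point requiring mild care is invoking that preservation of orthogonal joins is part of the definition of a morphism of Boolean inverse semigroups, as recorded in Section~\ref{sec:inverse-semigroups}.
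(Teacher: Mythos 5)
Your proposal is correct and follows essentially the same route as the paper: pick a preimage $\sum_i s_i e_i$ in the concrete model of $B(S)$ and observe that, since $E(G_0)=\{0,1\}$ and the $e_i$ (equivalently, the terms $s_ie_i$) are pairwise orthogonal, at most one term survives under $\pi$, so the image lands in $\pi(S_0)$. The paper phrases the collapse via $\pi|_{E(B(S))}$ being a character selecting a unique $i_0$ with $\pi(e_{i_0})=1$, whereas you argue via non-orthogonality of nonzero group elements; these are interchangeable justifications of the same step.
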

\begin{proof}
  Denote the quotient map $B(S) \thra G_0$ by $\pi$ and let $g \in G$.  Using the description of $B(S)$ presented in Section \ref{sec:inverse-semigroups}, there is some preimage $\sum_i t_i e_i \in B(S)$ of $g$. Since $G_0$ has only two idempotents, $\pi|_{E(B(S))}$ is a character.  So there is a unique $i_0$ satisfying $\pi(e_{i_0}) = 1$.  This implies $\pi(\sum_i t_i e_i) = \pi(t_{i_0})$, showing that $g \in \pi(S_0)$.
\end{proof}

\begin{proposition}
  \th\label{prop:isotropy-groups-characterisation-semigroups}
  Let $S$ be a countable inverse semigroup such that the orbit space of $\G = \G(S)$ is \Tzero.  Let $x \in \Gnought$ and write $G = \G\vert_x$.  Then the group with zero $G_0$ is a quotient of a corner of $S_0$.
\end{proposition}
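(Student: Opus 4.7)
The plan is to reduce to the groupoid version \th\ref{prop:isotropy-groups-characterisation} applied to $\G = \G(S)$, and then adapt the descent argument of \th\ref{lem:quotient-semigroup-Boolean-envelop} from $B(S)$ down to $S_0$ at the level of an appropriate corner. Since $S$ is countable, $\G(S)$ is second countable and, via noncommutative Stone duality, it is ample and Hausdorff with $\Gamma(\G(S)) \cong B(S)$. Applying \th\ref{prop:isotropy-groups-characterisation} to $\G(S)$ at the unit $x$, I obtain an idempotent $p \in B(S)$ together with a quotient map $\pi : p B(S) p \thra G_0$ whose restriction to idempotents is evaluation at $x$ on the unit space $\Gnought = \widehat{E(S)}$.

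My next step is to refine $p$ so that it already lies in $E(S)$. Revisiting the proof of \th\ref{prop:isotropy-groups-characterisation}, the idempotent $p$ arises as a compact open neighborhood of $x$ meeting the orbit $\G x$ only at $x$. Since the basic compact open sets $\{\chi : \chi(e) = 1\}$ for $e \in E(S)$ in the ultrafilter corresponding to $x$ form a neighborhood basis of $x$ in $\Gnought$, I can shrink to such a basic neighborhood and thereby replace $p$ by some $e \in E(S) \subseteq S_0$. This produces a quotient $\pi : e B(S) e \thra G_0$ with $\pi(e) = 1$.

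The remaining step is to adapt the argument of \th\ref{lem:quotient-semigroup-Boolean-envelop} to this corner. Using the description $B(S) = C(S)/{\sim}$ from Section \ref{sec:inverse-semigroups}, every $b \in e B(S) e$ admits a representation $b = ebe = \bigvee_i (e t_i e)(e f_i)$ with $t_i \in S$ and $f_i \in I(S)$; the idempotent $e$ commutes with the $f_i$, and multiplication by $e$ preserves the orthogonality conditions defining $C(S)$. Setting $s_i = e t_i e \in e S e$ and $h_i = e f_i \in I(S)$ with $h_i \leq e$ yields an orthogonal join $b = \bigvee_i s_i h_i$ with $s_i \in e S_0 e$. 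Since $G_0$ has only the idempotents $0$ and $1$, the restriction of $\pi$ to $E(e B(S) e)$ is a character, so at most one index $i_0$ satisfies $\pi(h_{i_0}) = 1$; whenever $\pi(b) \neq 0$ this forces $\pi(b) = \pi(s_{i_0})$. Hence $\pi(e S_0 e) = G_0$, presenting $G_0$ as a quotient of the corner $e S_0 e$ of $S_0$. The main obstacle, more bookkeeping than substance, is to confirm that applying $e(\cdot)e$ to the standard expansion of an element of $B(S)$ produces coefficients in $e S e$ and preserves the orthogonality needed to form the join in $e B(S) e$.
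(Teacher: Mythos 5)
Your overall strategy coincides with the paper's: produce an idempotent $e \in E(S)$ (not merely in $E(B(S))$) whose compact open set $D_e = \{\chi : \chi(e) = 1\}$ meets the orbit $\G x$ only in $x$, pass to the quotient $eB(S)e \cong \Gamma(\G|_{D_e}) \thra (\G|_x)_0$ coming from restriction to the fixed point $x$, and then descend from the Boolean corner to $e S_0 e$ by the argument of Lemma \ref{lem:quotient-semigroup-Boolean-envelop}. Your final descent step is a correct corner version of that lemma and matches what the paper does via the identification $eB(S)e \cong B(eSe)$.

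The gap is in the middle step. The sets $D_e$ with $e \in E(S)$ do \emph{not} form a neighbourhood basis of $x$ in $\Gnought \cong \widehat{E(S)}$. The topology of pointwise convergence on $\widehat{E(S)}$ has as a basis the sets $\{\chi : \chi(e) = 1,\ \chi(f_1) = \dots = \chi(f_n) = 0\}$, which correspond to idempotents of $I(S) = F_2[E(S)] = E(B(S))$ and in general not to elements of $E(S)$. Already for the two-element semilattice $E = \{1 > f\}$, the character $\chi$ with $\chi(1) = 1$ and $\chi(f) = 0$ is an isolated point of $\widehat{E}$, yet the only set of the form $D_e$ containing it is the whole space; so an arbitrary compact open neighbourhood of $x$ (such as the one supplied by Proposition \ref{prop:isotropy-groups-characterisation}) cannot in general be shrunk to one of the form $D_e$. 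What is actually needed, and what the paper proves instead, is a finite separation statement: starting from $q \in E(S)$ with $x(q) = 1$, the set $\G x \cap D_q$ is finite (this is where local closedness of orbits, i.e.\ the \Tzero{} hypothesis, enters), and each of the finitely many points $x_i \neq x$ in it is removed by multiplying $q$ by an idempotent $q_i \in E(S)$ with $x(q_i) = 1$ and $x_i(q_i) = 0$, so that $e = q q_1 \dotsm q_n$ does the job. Your appeal to a neighbourhood basis bypasses exactly this point, and as stated the claim is false; the step should be replaced by the finite separation argument.
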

\begin{proof}
  Fix $x \in \Gnought$.  Since $\Gnought \cong \widehat{E(S)}$, there is $q \in E(S)$ such that $x(q) = 1$.  Denote by $U \subseteq \Gnought$ the compact open subset corresponding to $q$.  As in the proof of Proposition \ref{prop:isotropy-groups-characterisation}, the fact that orbits of $\G$ are locally closed implies that $\G x \cap U$ is finite.  Fix an enumeration $x = x_0, x_1, \dotsc, x_n$ of $\G x \cap U$.  For $i \in \{1, \dotsc, n\}$ there is $q_i \in E(S)$ such that $x_0(q_0) = 1$ and $x_i(q_i) = 0$.  Put $p = q \cdot q_1 \dotsm q_n$ and let $V \subseteq \Gnought$ be the compact open subset corresponding to $p$.  Then the identification of corners of Boolean inverse semigroups says that
  \begin{gather*}
    \G(pSp) = \G(B(pSp)) \cong \G(pB(S)p) \cong \G|_V.
  \end{gather*}
  Since $x \in V$ is $\G|_V$-fixed, there is a quotient map $\Gamma(\G|_V) \to \Gamma(\G|_x) \cong (\G|_x)_0$.  The identification $\Gamma(\G|_V) \cong B(pSp)$ shows that there is a surjection $B(pSp) \to (\G|_x)_0$.  By \th~\ref{lem:quotient-semigroup-Boolean-envelop} its restriction $p S_0 p \cong (pSp)_0 \to (\G|_x)_0$ remains surjective.
\end{proof}

\section{Proof of the main results}
\label{sec:main-proofs}

We now obtain the proof of our main theorems. Thanks to the preparation made in the previous sections, all proofs are rather similar and we spell out details only for \th\ref{introthm:ccr-groupoid}.
\begin{proof}[Proof of \th\ref{introthm:ccr-groupoid}]
  Since $\G$ is an {\'e}tale, second countable Hausdorff groupoid, we may appeal to the results of Clark and Thoma described in \th\ref{thm:clark}.  It follows that $\G$ is CCR if and only if all its isotropy groups are virtually abelian and its orbit space is \Tone.  We can thus combine \th\ref{prop:isotropy-groups-characterisation} and \th\ref{prop:tone} to complete our proof.
\end{proof}

\begin{proof}[Proof of \th\ref{introthm:type-I-groupoid}]
Upon replacing the reference to \th\ref{prop:tone} by \th\ref{prop:tzero}, the same argument as used in the proof of \th\ref{introthm:ccr-groupoid} can be applied.
\end{proof}

\begin{proof}[Proof of \th\ref{introthm:ccr-semigroup}]
  Replacing the reference to \th\ref{prop:isotropy-groups-characterisation} by \th\ref{prop:isotropy-groups-characterisation-semigroups}, and alluding to the fact that $\Gamma(\G(S)) \cong B(S)$, the proof of \th\ref{introthm:ccr-groupoid} applies. 
\end{proof}

\begin{proof}[Proof of \th\ref{introthm:type-I-semigroup}]
  Making the same adaptions as in the passage from the proof of \th\ref{introthm:ccr-groupoid} to \ref{introthm:type-I-groupoid}, the proof of \th\ref{introthm:ccr-semigroup} applies.
\end{proof}

\begin{remark}
  \label{rem:no-quotient-semigroup}
  In view of our construction of $B(S)$ described in Section \ref{sec:inverse-semigroups}, the booleanization of an inverse semigroup can be concretely calculated, so that the conditions on $B(S)$ in Theorems \ref{introthm:ccr-semigroup} and \ref{introthm:type-I-semigroup} can checked.  Specifically for Theorem \ref{introthm:type-I-semigroup} we remark that the groupoid $\G(S)$ associated with an inverse semigroup always has a fixed point (whose isotropy group is the maximal group quotient of $S$).  It corresponds to the trivial character on $E(S)$ that maps every idempotent to $1$.  Since $0$-simplifying Boolean inverse semigroups correspond to minimal groupoids, it is not possible to directly translate the condition on $B(S)$ in Theorem \ref{introthm:type-I-semigroup} to an algebraic statement about $S$ itself.
\end{remark}

\printbibliography

\vspace{2em}
\begin{minipage}[t]{0.45\linewidth}
  \small
  Gabriel Favre \\
  Department of Mathematics \\
  Stockholm University \\
  106 91 Stockholm \\
  Sweden \\[1em]
  favre@math.su.se
\end{minipage}
\begin{minipage}[t]{0.45\linewidth}
  \small
  Sven Raum \\
  Department of Mathematics \\
  Stockholm University \\
  106 91 Stockholm \\
  Sweden \\[1em]
  and \\[1em]
  Institute of Mathematics of the \\ Polish Academy of Sciences \\
  ul.\ \'Sniadeckich 8 \\
  00-656 Warszawa \\
  Poland \\[1em]
  raum@math.su.se
\end{minipage}

\end{document}